\newcommand{\hei}{\CJKfamily{hei}}      
\newtheorem{theorem}{Theorem}[section]
\newtheorem{lemma}[theorem]{Lemma}
\newtheorem{proposition}[theorem]{Proposition}
\newtheorem{conjecture}[theorem]{Conjecture}
\newtheorem{remark}[theorem]{Remark}
\newtheorem{corollary}[theorem]{Corollary}
\newtheorem{definition}[theorem]{Definition}
\begin{document}

\title{\Large \hei \textbf{On the pro-$p$-Iwahori invariants of supersingular representations of unramified $U(2, 1)$}}
\date{}
\author{\textbf{Peng Xu}}
\maketitle

\begin{abstract}
Let $G$ be the unramified unitary group $U(2, 1)(E/F)$ defined over a non-archimedean local field $F$ of odd residue characteristic $p$. In this paper, for any supersingular representation of $G$ that contains the Steinberg weight, we prove its pro-$p$-Iwahori invariants, as a right module over the pro-$p$-Iwahori--Hecke algebra of $G$, is \emph{not} simple.
\end{abstract}

\tableofcontents

\section{Introduction}

Let $G$ be the unramified unitary group $U(2, 1)(E/F)$ defined over a non-archimedean local field $F$ of odd residue characteristic $p$, and $K$ be a maximal compact open subgroup of $G$. Let $I_K$ be the Iwahori subgroup in $K$, and $I_{1, K}$ be the pro-$p$-Sylow subgroup of $I_K$.

In this paper, we prove:
\begin{theorem}\label{main theorem: intro}(Theorem \ref{main theorem for st})
Let $\pi$ be a supersingular representation of $G$ containing the Steinberg weight of $K$. Then, we have
\begin{center}
$\textnormal{dim}_{\overline{\mathbf{F}}_p} \pi^{I_{1, K}} \geq 2$.
\end{center}
\end{theorem}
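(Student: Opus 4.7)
The plan is to produce two linearly independent vectors in $\pi^{I_{1, K}}$: one $v_1$ coming directly from the Steinberg weight $\sigma_{St}$ itself, and a second $v_2$ obtained by an affine-Weyl translate that survives in the supersingular quotient.

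To begin, I would use the hypothesis that $\pi$ contains $\sigma_{St}$ to embed $\sigma_{St} \hookrightarrow \pi|_K$. Since $\sigma_{St}^{I_{1, K}}$ is one-dimensional --- it is a distinguished character line for the torus $I_K / I_{1, K} \cong T(\mathbb{F}_q)$ inside the Steinberg --- this provides the vector $v_1$.

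For the second vector, I would realise $\pi$ as a quotient of the compact induction $\mathrm{ind}_K^G \sigma_{St}$ modulo the ideal generated by distinguished elements of the spherical Hecke algebra $\mathcal{H}(G, \sigma_{St})$ (the ``supersingular relations''). The $I_{1, K}$-invariants of this induction decompose via a Mackey identification
\begin{equation*}
(\mathrm{ind}_K^G \sigma_{St})^{I_{1, K}} \;\cong\; \bigoplus_{g \in K \backslash G / I_{1, K}} \sigma_{St}^{K \cap g I_{1, K} g^{-1}},
\end{equation*}
and among the double cosets one finds a representative $\tilde{w}$ --- a lift into $G$ of the affine reflection exchanging the two vertices of the edge fixed by $I_K$ --- whose contribution is again one-dimensional and yields a candidate vector $v_2$.

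The heart of the argument is then to check that $v_2$ survives in the quotient $\pi$ and that its image is not proportional to $v_1$. Linear independence should follow from the $I_K$-eigencharacter: $v_1$ carries a specific character $\chi$ of the torus $I_K / I_{1, K}$, while $v_2$ carries its $\tilde{w}$-conjugate $\chi^{\tilde{w}}$, and for the Steinberg weight these two characters are distinct. The survival of $v_2$ in $\pi$ is the chief technical obstacle: it amounts to an explicit computation of the action of the generator(s) of $\mathcal{H}(G, \sigma_{St})$ on the double-coset basis above, a calculation sensitive to the non-split structure of $U(2, 1)$ and to which conjugacy class of maximal compact subgroups $K$ belongs. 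Once both independence and survival are established, $\textnormal{dim}_{\overline{\mathbf{F}}_p} \pi^{I_{1, K}} \geq 2$ follows.
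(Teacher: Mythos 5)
Your overall setup is reasonable --- the paper also works inside $(\mathrm{ind}_K^G\sigma)^{I_{1,K}}$, whose basis $\{f_n\}_{n\in\mathbb{Z}}$ is exactly the double-coset decomposition you describe (with $f_0$ playing the role of your $v_1$ and $f_1$ that of your $v_2$) --- but the two steps you rely on both break down. First, your linear-independence criterion fails precisely for the Steinberg weight: the character $\chi_\sigma$ of $I_K/I_{1,K}$ on $\sigma_{St}^{I_{1,K}}$ is \emph{trivial}, hence equal to its $\beta_K$-conjugate $\chi_\sigma^s$, so $v_1$ and $v_2$ lie in the \emph{same} $I_K$-eigenspace and the eigencharacter argument gives nothing. (This is visible in the paper: the Steinberg weight is ``degenerate'' of trivial type, which is exactly why $T_\sigma = T+1$ there.) Second, the step you defer --- showing that $v_2$ survives and is not proportional to $v_1$ in the quotient --- is not a routine Hecke computation; it is the entire content of the theorem. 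Mere nonvanishing of both vectors would not suffice, since by the first point they could still span the same line.

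The paper's actual route is a proof by contradiction that has no analogue in your sketch: assuming $\overline{f_0}+c\,\overline{f_1}=0$ for some $c\in\overline{\mathbf{F}}_p^\times$, one applies two explicit averaging operators $S_K$ and $S_-$ (which preserve $I_{1,K}$-invariance and act computably on the basis $\{f_n\}$), combines the output with the relation $T_\sigma=T+1$ and $Tf_1=f_2$ to force $c=-1$, and then applies $S_-$ once more to produce $2f_1$, whose image is nonzero because $\sigma_{St}$ is degenerate and $p\neq 2$. Both the degeneracy of the Steinberg weight and the hypothesis $p\neq 2$ are essential and are invisible in your proposal; to repair it you would need to supply, in place of the eigencharacter argument, some concrete mechanism (such as these operators) that converts the assumed proportionality into a contradiction.
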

Here, an irreducible smooth $\overline{\mathbf{F}}_p$-representation of $G$ is supersingular, if it is a quotient of certain spherical universal modules, see \ref{subsec: ss} for the precise definition.

Indeed, what we proved in Theorem \ref{main theorem for st} is conceptually stronger than that stated in Theorem \ref{main theorem: intro}: as a counterpart to a result of Barthel--Livn$\acute{\text{e}}$ on $GL_2$ (Theorem \ref{GL_2}), we propose a naive conjecture (\ref{main conjeture}) on the pro-$p$-Iwahori invariants of supersingular representations of $G$, and we prove it for those containing the \emph{Steinberg} weight.

\begin{remark}
Any spherical universal module of $G$ is infinite dimensional (\cite[Corollary 4.6]{X2016}). Therefore, supersingular representations of $G$ containing a given weight do exist. Moreover, according to the philosophy of Breuil and Pa$\check{\text{s}}$k$\bar{\text{u}}$nas on $GL_2$ (\cite[Proposition 10.2]{BP12}), it is indeed very likely that there are infinitely many non-isomorphic supersingular representations of $G$ containing a given weight.
\end{remark}

Our motivation of this paper is to understand the pro-$p$-Iwahori invariants of irreducible smooth $\overline{\mathbf{F}}_p$-representations of $G$. Let $\mathcal{H} (I_{1, K}, 1) := \text{End}_G (\text{ind}^G _{I_{1, K}} 1)$ be the pro-$p$-Iwahori--Hecke algebra of $G$. For a smooth $\overline{\mathbf{F}}_p$-representation $\pi$ of $G$, its pro-$p$-Iwahori invariants $\pi^{I_{1, K}}$ is naturally a right module over $\mathcal{H} (I_{1, K}, 1)$. On one hand, when $\pi$ is an irreducible subquotient of principal series, we understand it well: $\pi^{I_{1, K}}$ is \emph{simple} over $\mathcal{H}(I_{1, K}, 1)$ (\cite[Corollary 4.4]{Karol-Peng2012}). On the other hand, when $\pi$ is a supersingular representation, we know very little about $\pi^{I_{1, K}}$. The following corollary deduced from Theorem \ref{main theorem: intro}, which, besides other things, unveils a new feature of them.

\begin{corollary}\label{non-simpleness: intro}(Corollary \ref{non-simpleness})
Let $\pi$ be a supersingular representation of $G$ containing the Steinberg weight of $K$. Then, the space $\pi^{I_{1, K}}$ as a right module over $\mathcal{H} (I_{1, K}, 1)$, is \textbf{not} simple.
\end{corollary}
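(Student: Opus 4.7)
The plan is to derive the corollary from Theorem~\ref{main theorem: intro} by combining its dimension bound with the structure of simple right modules over $\mathcal{H}(I_{1,K}, 1)$. This is exactly the point where $U(2,1)$ diverges from $GL_2$: the Barthel--Livn\'e Theorem~\ref{GL_2} asserts that for $GL_2$ the analogous invariants are $2$-dimensional and \emph{simple}, so the mere dimension bound is not enough -- the structure of simple Hecke modules must be exploited.

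First, I would invoke Theorem~\ref{main theorem: intro} to get $\dim_{\overline{\mathbf{F}}_p} \pi^{I_{1,K}} \geq 2$. Second, I would restrict the $\mathcal{H}(I_{1,K}, 1)$-action on $\pi^{I_{1,K}}$ to the finite pro-$p$-Iwahori Hecke subalgebra $\mathcal{H}_K := \mathcal{H}(I_{1,K}\cap K, 1)$, under which the space decomposes into a direct sum of characters (eigenspaces indexed by weights of $K$). Since $\pi$ contains the Steinberg weight, the line $\sigma_{St}^{I_{1,K}\cap K}$ contributes one specific character $\chi_{St}$ of $\mathcal{H}_K$. The proof of Theorem~\ref{main theorem for st} should also identify the second $I_{1,K}$-fixed line as an $\mathcal{H}_K$-eigenvector for a character $\chi' \neq \chi_{St}$ coming from a different $K$-weight appearing in $\pi$.

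Third, to conclude non-simplicity, I would appeal to the classification of simple supersingular right $\mathcal{H}(I_{1,K}, 1)$-modules for $U(2,1)$: each such simple module supports a single $\mathcal{H}_K$-character (unlike the $GL_2$ case, where the simple supersingular module has dimension $2$ and supports a pair of conjugate $\mathcal{H}_K$-characters). Since $\pi^{I_{1,K}}$ carries two distinct $\mathcal{H}_K$-characters by the previous step, it cannot be a simple $\mathcal{H}(I_{1,K}, 1)$-module.

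The main obstacle is the third step: verifying rigorously that simple supersingular right modules over $\mathcal{H}(I_{1,K}, 1)$ for $U(2,1)$ carry only one character of the finite subalgebra $\mathcal{H}_K$. I expect this to follow from Vign\'eras' general classification of simple modules over the pro-$p$-Iwahori Hecke algebra, specialised to the Iwahori-Weyl group of the unramified $U(2,1)$, or alternatively from a direct calculation using a Bernstein-type presentation of $\mathcal{H}(I_{1,K}, 1)$ in this rank-one setting. A secondary -- but easier -- obstacle is confirming in the proof of Theorem~\ref{main theorem for st} that the extra $I_{1,K}$-fixed vector genuinely lies in a different $\mathcal{H}_K$-isotypic component than the Steinberg line, rather than merely in the same character's eigenspace.
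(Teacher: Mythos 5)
Your overall plan (the dimension bound from Theorem \ref{main theorem: intro} combined with the structure of simple right modules over $\mathcal{H}(I_{1,K},1)$) is the right shape, but as written it has a genuine gap at the decisive step. You only rule out the possibility that $\pi^{I_{1,K}}$ is a simple \emph{supersingular} module. The classification in \cite{Karol-Peng2012} also contains two-dimensional non-supersingular simple modules (those arising from irreducible subquotients of principal series), and nothing in your argument excludes $\pi^{I_{1,K}}$ from being isomorphic to one of those: the bound $\dim_{\overline{\mathbf{F}}_p}\pi^{I_{1,K}}\geq 2$ is perfectly consistent with a two-dimensional simple module, and your $\mathcal{H}_K$-character analysis is only asserted for the supersingular simples. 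The missing input is precisely the theorem of Ollivier--Vign\'eras (\cite[Theorem 5.3, Remark 5.2 (e)]{OV2017}) that the pro-$p$-Iwahori invariants of an admissible supersingular representation admit \emph{only supersingular} subquotients; this is what the paper uses to exclude the non-supersingular simples. (One also needs the small preliminary reduction that $\pi$ may be assumed admissible, which is harmless because simple $\mathcal{H}(I_{1,K},1)$-modules are finite dimensional, so an infinite-dimensional $\pi^{I_{1,K}}$ is automatically non-simple.)

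Once that input is in place, your second and third steps become unnecessary and, as you yourself flag, they are not actually supplied by the proof of Theorem \ref{main theorem for st}: that proof shows $\overline{f_0}$ and $\overline{f_1}$ are not proportional, but it does not exhibit the second fixed line as an $\mathcal{H}_K$-eigenvector for a character distinct from the Steinberg one, so your ``two distinct $\mathcal{H}_K$-characters'' claim is unsubstantiated. The paper's route avoids this bookkeeping entirely: by \cite[4.2]{Karol-Peng2012} every simple supersingular module of $\mathcal{H}(I_{1,K},1)$ for $U(2,1)$ is a one-dimensional character, so after Ollivier--Vign\'eras a simple $\pi^{I_{1,K}}$ would have to be one-dimensional, which contradicts $\dim_{\overline{\mathbf{F}}_p}\pi^{I_{1,K}}\geq 2$ directly.
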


\medskip

We now review briefly what is known in the literatures (to our knowledge). For a two dimensional continuous irreducible generic $\overline{\mathbf{F}}_p$-representation of the absolute Galois group $G_{\mathbf{Q}_{p^f}} (f> 1)$, Breuil and Pa$\check{\text{s}}$k$\bar{\text{u}}$nas have associated it an infinite family of supersingular representations of $GL_2 (\mathbf{Q}_{p^f})$ (\cite[Theorem 1.5]{BP12}), and one of these supersingular representation has been proved by Emerton--Gee--Savitt (\cite{EGS15}) that it will \emph{appear} globally, which implies that for such a supersingular representation its pro-$p$-Iwahori invariants is equal to the original recipe used to construct it. These in all suggest\footnote{We heard of this from Prof. Pa$\check{\text{s}}$k$\bar{\text{u}}$nas several years ago.} that the pro-$p$-Iwahori invariants of supersingular representations of $GL_2 (F)$, as right modules over the its pro-$p$-Iwahori--Hecke algebra, might \emph{not} be simple in general, unless $F= \mathbf{Q}_p$.

Our Corollary \ref{non-simpleness: intro} is in a different manner from the result on $GL_2 (\mathbf{Q}_{p^f})$ ($f> 1$) mentioned above, and the condition of requiring to contain the Steinberg weight, as remarked above, is likely satisfied by infinitely many non-isomorphic supersingular representations of $G$.

\medskip
The paper is organized as follows. In section \ref{sec: notations and pre}, we fix notations and review some preliminaries facts that we will use. In section \ref{sec: conj}, we propose a naive conjecture on the pro-$p$-Iwahori invariants of supersingular representations of $G$. In section \ref{sec: the image of pro-p under two operators}, we describe the image of the pro-$p$-Iwahori invariants of a maximal compact induction under certain group operators. In section \ref{sec: proof of main results}, we prove our main results.

\section{Notations and Preliminaries}\label{sec: notations and pre}

\subsection{General notations}
Let $F$ be a non-archimedean local field of odd residue characteristic $p$, with ring of integers $\mathfrak{o}_F$ and maximal ideal $\mathfrak{p}_F$, and $k_F$ be its residue field of cardinality $q=p^f$. Fix a separable closure $F_s$ of $F$. Let $E$ be the unramified quadratic extension of $F$ in $F_s$. We use similar notations $\mathfrak{o}_E$, $\mathfrak{p}_E$, $k_E$ for analogous objects of $E$. Fix a uniformizer $\varpi_{E}$ in $E$. We equip $E^3$ with the Hermitian form h:
\begin{center}
 $\text{h}:~E^3 \times E^3 \rightarrow E$,
$(v_{1}, v_{2}) \mapsto ~v_{1}^{\text{T}}\beta \overline{v_2}, v_{1}, v_{2}\in E^3$.
\end{center}
Here, $-$ denotes the non-trivial Galois conjugation on $E/F$, inherited by $E^3$, and
$\beta$ is the matrix
\[ \begin{matrix}\begin{pmatrix} 0  & 0 & 1  \\ 0  & 1 & 0\\
1 & 0 & 0
\end{pmatrix}
\end{matrix}. \]
The unitary group $G$ is defined as:

\begin{center}
$G=\{g\in \text{GL}(3, E)\mid \text{h}(gv_1, gv_2)= \text{h}(v_1, v_2), \forall~v_1, v_2~\in E^3\}.$
\end{center}

Let $B=HN$ (resp, $B'= HN'$) be the subgroup of upper (resp, lower) triangular matrices of $G$, with $N$ (resp, $N'$) the unipotent radical of $B$ (resp, $B'$) and $H$ the diagonal subgroup of $G$. Denote an element of the following form in $N$ and $N'$ by $n(x, y)$ and $n'(x, y)$ respectively:
\begin{center}
$\begin{pmatrix}  1 & x & y  \\ 0 & 1 & -\bar{x}\\
0 & 0 & 1
\end{pmatrix}$, ~
$\begin{pmatrix}  1 & 0 & 0   \\ x & 1 & 0\\
y & -\bar{x} & 1
\end{pmatrix},$
\end{center}
where $(x, y)\in E^2$ satisfies $x\bar{x}+ y+ \bar{y}=0$. For any $k\in \mathbb{Z}$, denote by $N_k$ (resp, $N'_k$) the subgroup of $N$ (resp, $N'$) consisting of $n(x, y)$ (resp, $n'(x, y)$) with $y\in \mathfrak{p}^{k}_E$. For $x\in E^\times$, denote by $h(x)$ an element in $H$ of the following form:
\begin{center}
$\begin{pmatrix}  x & 0 & 0  \\ 0 & -\bar{x}x^{-1} & 0\\
0 & 0 & \bar{x}^{-1}
\end{pmatrix}.$
\end{center}

\medskip
Up to conjugacy, the group $G$ has two maximal compact open subgroups $K_0$ and $K_1$, given by:
\begin{center}
$K_0= \begin{pmatrix}  \mathfrak{o}_E & \mathfrak{o}_E & \mathfrak{o}_E  \\ \mathfrak{o}_E  & \mathfrak{o}_E & \mathfrak{o}_E\\
\mathfrak{o}_E & \mathfrak{o}_E & \mathfrak{o}_E
\end{pmatrix}\cap G, ~K_1= \begin{pmatrix}  \mathfrak{o}_E & \mathfrak{o}_E & \mathfrak{p}^{-1}_E  \\ \mathfrak{p}_E  & \mathfrak{o}_E & \mathfrak{o}_E\\
\mathfrak{p}_E & \mathfrak{p}_E & \mathfrak{o}_E
\end{pmatrix}\cap G.$
\end{center}
The maximal normal pro-$p$ subgroups of $K_0$ and $K_1$ are respectively:

$K^1 _0= 1+\varpi_E M_3 (\mathfrak{o}_E)\cap G, ~K^1 _1= \begin{pmatrix}  1+\mathfrak{p}_E & \mathfrak{o}_E & \mathfrak{o}_E  \\ \mathfrak{p}_E  & 1+\mathfrak{p}_E & \mathfrak{o}_E\\
\mathfrak{p}^2_E & \mathfrak{p}_E & 1+\mathfrak{p}_E
\end{pmatrix}\cap G.$

Let $\alpha$ be the following diagonal matrix in $G$:
\[ \begin{matrix}\begin{pmatrix} \varpi_{E}^{-1}  & 0 & 0  \\ 0  & 1 & 0\\
0 & 0 & \varpi_{E}
\end{pmatrix}
\end{matrix} ,\]
and put $\beta'=\beta \alpha^{-1}$. Note that $\beta\in K_0$ and $\beta'\in K_1$.  We use $\beta_K$ to denote the unique element in $K\cap \{\beta, \beta'\}$.

Let $K\in \{K_0, K_1\}$, and $K^1$ be the maximal normal pro-$p$ subgroup of $K$. We identify the finite group $\Gamma_K= K/K^1$ with the $k_F$-points of an algebraic group defined over $k_F$, denoted also by $\Gamma_K$: when $K$ is $K_0$, $\Gamma_K$ is $U(2, 1)(k_E /k_F)$, and when $K$ is $K_1$, $\Gamma_K$ is $U(1, 1)\times U(1)(k_E /k_F)$. Let $\mathbb{B}$ (resp, $\mathbb{B}'$) be the upper (resp, lower) triangular subgroup of $\Gamma_K$, and $\mathbb{U}$ (resp, $\mathbb{U}'$) be its unipotent radical.

The Iwahori subgroup $I_K$ (resp, $I'_K$) and pro-$p$ Iwahori subgroup $I_{1,K}$ (resp, $I' _{1,K}$) in $K$ are the inverse images of $\mathbb{B}$ (resp, $\mathbb{B}'$) and $\mathbb{U}$ (resp, $\mathbb{U}'$) in $K$.
Recall we have the following Bruhat decomposition for $K$:
 \begin{center}
 $K= I_K \cup I_K \beta_K I_K$.
 \end{center}

Put $H_0= H\cap I_K$, and $H_1= H\cap I_{1, K}$. As $H_0 /H_1 \cong I_K / I_{1, K}$, we will identify the characters of these groups. For a character $\chi$ of $H_0$, i.e., a character of $H_0 /H_1$, denote by $\chi^s$ the character given by $\chi^s (h):= \chi (\beta_K h \beta_K)$

\smallskip

Denote by $n_K$ and $m_K$ the unique integers such that $N\cap I_{1, K}= N_{n_K}$ and $N'\cap I_{1, K}=N'_{m_K}$. We have:
\begin{center}
$n_K +m_K =1$
\end{center}

Note that the coset spaces $N_{n_K}/N_{n_K +1}$ and $N'_{m_K}/N'_{m_K +1}$ are indeed groups of order respectively $q^{t_K}$ and $q^{4-t_K}$, for $t_K= 3$ or $1$, depending on $K$ is hyperspecial or not.

We will use the following group:
\begin{center}
$L_{q^3} := \{(x, t)\in k^2 _E \mid x\bar{x} + t +\bar{t}=0\}$,
\end{center}
and its central subgroup:
\begin{center}
$L_q := \{(0, t)\mid t +\bar{t}=0\}$.
\end{center}
Here, the group structure of $L_{q^3}$ is given by
\begin{center}
$(x, t) \cdot (x', t') := (x+x', t+ t'- x' \bar{x}).$
\end{center}

We may identify these groups naturally:
\begin{center}
$L_{n_K}: N_{n_K}/N_{n_K +1} \cong L_{q^{t_K}}$

$n(x, \varpi^{n_K} _E t) \mapsto (x\varpi^{-n_K}_E , t) (\text{mod}~\mathfrak{p}_E)$
\end{center}

\begin{center}
$L_{m_K}: N'_{m_K}/N'_{m_K +1}\cong L_{q^{4-t_K}}$

$n'(\varpi_E x, \varpi^{m_K} _E t) \mapsto (x\varpi^{2-m_K}_E, t)(\text{mod}~\mathfrak{p}_E)$
\end{center}
Here, the elements $x$ and $t$ on the left hand side lie in $\mathfrak{o}_E$.

We fix a non-zero element $\mathfrak{t}\in \mathfrak{o}^\times _E$ with trace zero.

\medskip
All the representations of $G$ and its subgroups considered in this paper are smooth over $\overline{\mathbf{F}}_p$.

\subsection{The spherical Hecke algebra $\mathcal{H}(K, \sigma)$}
 Let $K$ be a maximal compact open subgroup of $G$, and $(\sigma, W)$ be an irreducible smooth representation of $K$. As $K^1$ is pro-$p$ and normal, $\sigma$ factors through the finite group $\Gamma_K= K/K^1$, i.e., $\sigma$ is the inflation of an irreducible representation of $\Gamma_K$. Conversely, any irreducible representation of $\Gamma_K$ inflates to an irreducible smooth representation of $K$. We may therefore identify irreducible smooth representations of $K$ with irreducible representations of $\Gamma_K$, and we shall call them \emph{weights} of $K$ or $\Gamma_K$ from now on.

  It is known that $\sigma^{I_{1,K}}$ and $\sigma_{I'_{1,K}}$ are both one-dimensional, and that the natural composition map $\sigma^{I_{1,K}}\hookrightarrow \sigma \twoheadrightarrow \sigma_{I'_{1,K}}$ is an isomorphism of vector spaces (\cite[Theorem 6.12]{C-E2004}). Denote by $j_\sigma$ the inverse of the map aforementioned. For $v\in \sigma^{I_{1,K}}$, we have $j_\sigma (\bar{v})= v$, where $\bar{v}$ is the image of $v$ in $\sigma_{I'_{1,K}}$. By composition, we view $j_\sigma$ naturally a map in $\text{End}_{\overline{\mathbf{F}}_p} (\sigma)$.

 Let $\text{ind}_K ^{G}\sigma$ be the smooth representation of $G$ compactly induced from $\sigma$, i.e., the representation of $G$ with underlying space $S(G, \sigma)$
\begin{center}
$S(G, \sigma)=\{f: G\rightarrow W\mid  f(kg)=\sigma (k)\cdot f(g), \forall~k\in K, g\in G,~ \text{smooth~with~compact~support}\}$
\end{center}
and $G$ acting by right translation.

The spherical Hecke algebra $\mathcal{H}(K, \sigma)$ is defined as $\text{End}_G (\text{ind}^G _K \sigma)$. By \cite[Proposition 5]{B-L94}, it is isomorphic to the convolution algebra $\mathcal{H}_K (\sigma)$:
\begin{center}
$\mathcal{H}_K (\sigma)= \{\varphi: G\rightarrow \text{End}_{\overline{\mathbf{F}}_p}(\sigma) \mid \varphi(kgk')=\sigma(k)\varphi(g)\sigma(k'), \forall~k, k'\in K, g\in G, ~\text{smooth~with~compact~support}\}$
\end{center}

Let $\varphi$ be the function in $\mathcal{H}_K (\sigma)$, supported on $K\alpha K$ and satisfying $\varphi (\alpha)= j_\sigma$. Denote by $T$ the Hecke operator in $\mathcal{H}(K, \sigma)$, which corresponds to the function $\varphi$ via the isomorphism aforementioned between $\mathcal{H}_K (\sigma)$ and $\mathcal{H}(K, \sigma)$. We refer the readers to \cite[(4)]{X2016} for a formula of $T$.

The structure of the algebra $\mathcal{H}(K, \sigma)$ is well-understood, and it is isomorphic to $\overline{\mathbf{F}}_p [T]$ (\cite[Corollary 1.3]{Her2011a}).

\subsection{Definition of supersingular representations}\label{subsec: ss}

Let $\sigma$ be a weight of $K$, and $\chi_\sigma$ be the character of $I_K$ for its action on the line $\sigma^{I_{1, K}}$.

\subsubsection{Hecke eigenvalues of principal series for $T$}

In this part, we follow along the lines in \cite{B-L95} to compute the Hecke eigenvalue of a general principal series of $G$ for $T$.

\begin{lemma}\label{non-zero of hom to prin}
For a character $\varepsilon$ of $B$, the space $\textnormal{Hom}_G (\textnormal{ind}_K ^G \sigma, \textnormal{ind}^{G}_B \varepsilon)$ is at most one dimensional, and it is non-zero if and only if
\begin{center}
$\varepsilon \mid_{H\cap K} = \chi^{s}_\sigma$.
\end{center}

\end{lemma}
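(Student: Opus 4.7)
The natural approach is a two-step application of Frobenius reciprocity combined with the Iwasawa decomposition, after which the computation reduces to a single character identification at the level of the finite group $\Gamma_K$.

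First, Frobenius reciprocity for compact induction gives an isomorphism
\[
\mathrm{Hom}_G(\mathrm{ind}_K^G \sigma,\, \mathrm{ind}_B^G \varepsilon) \;\cong\; \mathrm{Hom}_K(\sigma,\, (\mathrm{ind}_B^G \varepsilon)|_K).
\]
Since the Iwasawa decomposition $G = BK$ holds for both special maximal compact subgroups $K_0$ and $K_1$, Mackey's formula yields a $K$-equivariant isomorphism $(\mathrm{ind}_B^G \varepsilon)|_K \cong \mathrm{ind}_{B\cap K}^K (\varepsilon|_{B\cap K})$. A second Frobenius identifies the Hom space with $\mathrm{Hom}_{B\cap K}(\sigma,\, \varepsilon|_{B\cap K})$.

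Next, since $\varepsilon$ is a character of $B$ it is trivial on $N$, and hence on $N\cap K$; any $(B\cap K)$-equivariant map $\sigma \to \varepsilon$ must therefore factor through the $(N\cap K)$-coinvariants of $\sigma$. As $\sigma$ is inflated from $\Gamma_K$ and $N\cap K$ surjects onto $\mathbb{U}$, these coinvariants coincide with $\sigma_{\mathbb{U}}$, and the problem is reduced to computing
\[
\mathrm{Hom}_{H\cap K}(\sigma_{\mathbb{U}},\, \varepsilon|_{H\cap K}).
\]
The key finite-group ingredient I would invoke is that $\sigma_{\mathbb{U}}$ is one-dimensional, with $H\cap K$ acting via the twisted character $\chi_\sigma^s$. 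I plan to deduce this from the pieces already set up in the paper. Conjugation by the Weyl-type element $\beta_K$ sends $\mathbb{U}$ to $\mathbb{U}'$, so $\beta_K\cdot\sigma^{\mathbb{U}}=\sigma^{\mathbb{U}'}$, and the calculation $h\cdot(\beta_K v) = \chi_\sigma(\beta_K h \beta_K)\,\beta_K v$ shows that $H\cap K$ acts on $\sigma^{\mathbb{U}'}$ by $\chi_\sigma^s$. The opposite-Borel analogue of \cite[Theorem 6.12]{C-E2004} then supplies an $(H\cap K)$-equivariant isomorphism $\sigma^{\mathbb{U}'} \xrightarrow{\sim} \sigma_{\mathbb{U}}$, transporting the character $\chi_\sigma^s$ across. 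The dichotomy ``at most one-dimensional, and nonzero precisely when $\varepsilon|_{H\cap K}=\chi_\sigma^s$'' is then immediate.

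The main obstacle, modest as it is, is this last step: correctly tracking the $\beta_K$-twist so that the character on $\sigma_{\mathbb{U}}$ comes out to $\chi_\sigma^s$ rather than $\chi_\sigma$, and justifying the Borel-versus-opposite-Borel variant of the cited result of \cite{C-E2004}. All the preceding manipulations are standard formal consequences of Frobenius reciprocity and Iwasawa, and should not present any difficulty.
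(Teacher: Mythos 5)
Your argument is correct and is essentially the paper's own proof: the paper simply notes the Iwasawa decomposition $G=BK$ and refers to the argument of Barthel--Livn\'e's Proposition 23, which is exactly the Frobenius--Mackey reduction to $\mathrm{Hom}_{H\cap K}(\sigma_{\mathbb{U}},\varepsilon)$ and the identification of the $(H\cap K)$-action on $\sigma_{\mathbb{U}}$ as $\chi_\sigma^s$ that you spell out.
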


\begin{proof}
As we have an Iwasawa decomposition $G= BK$, the argument is identical to that of \cite[Proposotion 23]{B-L94}.
\end{proof}

\medskip
When the condition in Lemma \ref{non-zero of hom to prin} is satisfied, the space
\begin{center}
$\textnormal{Hom}_G (\textnormal{ind}_K ^G \sigma, \textnormal{ind}^{G}_B \varepsilon)$
\end{center}
is one dimensional, and thus it affords a character of the spherical Hecke algebra $\mathcal{H}(K, \sigma)$.

\begin{proposition}\label{eigenvalue for T}
The eigenvalue of $T$ on the space $\textnormal{Hom}_G (\textnormal{ind}_K ^G \sigma, \textnormal{ind}^{G}_B \varepsilon)$ is:
\begin{center}
$\varepsilon (\alpha)+ \sum_{(x, t)\in L^\times _{q^{4-t_K}}}\chi_\sigma (h(t))$
\end{center}
\end{proposition}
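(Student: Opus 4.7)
The plan is to adapt the classical Barthel--Livné computation (\cite[Proposition 9]{B-L94}) to $G$. Fix a non-zero $\Phi \in \textnormal{Hom}_G(\textnormal{ind}_K^G \sigma, \textnormal{ind}^G_B \varepsilon)$ and a non-zero $v_\sigma \in \sigma^{I_{1,K}}$. Since the Hom space is one-dimensional by Lemma \ref{non-zero of hom to prin}, $T$ acts on it by a scalar $\lambda$ with $\Phi \circ T = \lambda \Phi$. I compute $\lambda$ by applying both sides to the generator $[1, v_\sigma]$ and evaluating at a well-chosen point of $G$.

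First, I give an explicit description of $F_\sigma := \Phi([1, v_\sigma]) \in \textnormal{ind}^G_B \varepsilon$. From $G = BK$ (Iwasawa), $K = I_K \sqcup I_K \beta_K I_K$ (Bruhat) and the Iwahori factorization of $I_K$, one obtains the disjoint decomposition $G = BI_K \sqcup B\beta_K I_K$. The $K$-equivariance of $\Phi$, the one-dimensionality of $\sigma^{I_{1,K}}$, and the matching condition $\varepsilon|_{H_0} = \chi_\sigma^s$ together force $F_\sigma$ to be supported on the big cell $B\beta_K I_K$ with $F_\sigma(\beta_K)\neq 0$, so I evaluate at $g_0 = \beta_K$.

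Next, I invoke \cite[(4)]{X2016} to expand $T[1, v_\sigma]$ as a sum over coset representatives of $K\alpha K/K$, of the form
\begin{equation*}
T[1, v_\sigma] \;=\; [\alpha,\, v_\sigma] \;+\; \sum_{(x,t)\in L^\times_{q^{4-t_K}}} \bigl[\, n'(\varpi_E x,\, \varpi_E^{m_K} t)\, \alpha^{-1},\ j_\sigma(v_\sigma)\,\bigr],
\end{equation*}
the extra cosets being parametrized by the isomorphism $N'_{m_K}/N'_{m_K+1} \cong L_{q^{4-t_K}}$. I then apply $\Phi$ termwise and evaluate at $\beta_K$. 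For the main term, the identity $\beta_K \alpha = \alpha^{-1}\beta_K$ together with the $B$-equivariance $F_\sigma(bg) = \varepsilon(b)F_\sigma(g)$ and $\chi_\sigma$ being trivial on $H_1$ yield $\varepsilon(\alpha)F_\sigma(\beta_K)$ (up to conventions on $\alpha$ versus $\alpha^{-1}$). For each summation term, I use the Bruhat decomposition of $n'(\varpi_E x, \varpi_E^{m_K} t)$ in $B\beta B$, which is valid because $(x,t)\in L^\times$ forces $t\neq 0$: it equals $n_1\, h(\varpi_E^{m_K} t)\, \beta\, n_2$ for some $n_1, n_2 \in N$. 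After pushing $\beta_K$ past $n_1$, collapsing unipotent factors against $\varepsilon|_N = 1$ and the $I_{1,K}$-invariance properties, and reducing the resulting diagonal element $h(\varpi_E^{m_K} t)\alpha^{-1}$ modulo $H_1$, the contribution becomes $\chi_\sigma(h(t))F_\sigma(\beta_K)$, via the twist relation $\chi_\sigma^s(\cdot) = \chi_\sigma(\beta_K \cdot \beta_K^{-1})$ identifying $\varepsilon|_{H_0}$ with $\chi_\sigma \circ \textnormal{Ad}(\beta_K)$. Summing and dividing by $F_\sigma(\beta_K) \neq 0$ yields the claimed eigenvalue.

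The hard part will be the bookkeeping of the Bruhat decomposition of $n'(\varpi_E x, \varpi_E^{m_K} t)$ in $U(2,1)$: one must verify that its diagonal piece is $h(\varpi_E^{m_K} t)$ up to $H_1$, and that the product $h(\varpi_E^{m_K} t)\alpha^{-1}$, after the $\beta_K$-twist, lies in $H_0$ and reduces in $H_0/H_1$ to an element on which $\chi_\sigma$ takes the value $\chi_\sigma(h(t))$. A secondary care point is normalization---securing $\varepsilon(\alpha)$ rather than $\varepsilon(\alpha^{-1})$, which depends on the precise form of \cite[(4)]{X2016} and the relation $\beta_K \alpha \beta_K^{-1} = \alpha^{-1}$---and checking that the argument is uniform in the two cases $K=K_0$ and $K=K_1$ through the parameter $t_K$.
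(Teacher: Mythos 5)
Your computation is the same Barthel--Livn\'e-style evaluation that the paper itself relies on (its proof simply cites \cite[Proposition 3.24]{X2014}, which follows \cite{B-L95}): expand $T[1,v_\sigma]$ over coset representatives of $K\alpha K/K$ parametrized by $N'_{m_K}/N'_{m_K+1}\cong L_{q^{4-t_K}}$, apply $\Phi$, and evaluate at $\beta_K$, where the matching condition $\varepsilon|_{H_0}=\chi_\sigma^s$ guarantees $F_\sigma(\beta_K)\neq 0$. One small inaccuracy: $F_\sigma$ need not be supported only on the big cell $B\beta_K I_K$ (for a character weight such as the trivial one it is also non-zero on $BI_K$), but your argument never actually uses that support claim, only the non-vanishing $F_\sigma(\beta_K)\neq 0$, so the outline stands.
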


\begin{proof}
The argument of \cite[Proposition 3.24]{X2014} for $K_0$ can be slightly modified to work for any $K$.
\end{proof}

We will say the weight $\sigma$ is \emph{degenerate}, if 
\begin{center}
$\sum_{(x, t)\in L^\times _{q^{4-t_K}}}\chi_\sigma (h(t))\neq 0$,
\end{center}
Otherwise, we say $\sigma$ is \emph{regular}. Note that when $\chi_\sigma = 1$, it is trivial to see the above sum is equal to $-1$.

\begin{remark}\label{value of the sum in T_sigma}
The value of the sum $\sum_{(x, t)\in L^\times _{q^{4-t_K}}}\chi_\sigma (h(t))$ appearing in Proposition \ref{eigenvalue for T} is known, and the readers are referred to \cite[Appendix A]{Karol-Peng2012} for a full list of it. In terms of loc.cit, the \textbf{degenerate} case will only happen in two situations:

$1)$.~ $\chi_\sigma$ is of the trivial type.

$2)$.~ $\chi_\sigma$ is hybrid and $K$ is hyperspecial.

\medskip
When $\sigma$ is degenerate, we have (loc.cit):
\begin{center}
$\sum_{(x, t)\in L^\times _{q^{4-t_K}}}\chi_\sigma (h(t))= -\chi_\sigma (h(\mathfrak{t}))$.
\end{center}

\end{remark}

\subsubsection{The Hecke operator $T_\sigma$}

On account of Proposition \ref{eigenvalue for T}, we consider $T_\sigma \in \mathcal{H}(K, \sigma)$ given by:
\begin{center}
$T_\sigma :=T- \sum_{(x, t)\in L^\times _{q^{4-t_K}}}\chi_\sigma (h(t))$
\end{center}

\begin{definition}\label{super reps}
An irreducible smooth representation $\pi$ of $G$ is called supersingular if it is a quotient of $\textnormal{ind}^G _K \sigma/(T_\sigma)$, for some weight $\sigma$ of $K$.
\end{definition}

\begin{remark}
Recall from \cite[III, Part A]{AHHV17}, an irreducible admissible smooth representation $\pi$ of a $p$-adic reductive group $G$ is \textbf{supersingular}, if it contains a weight $\sigma$ (w.r.t a special parahoric subgroup $K$), satisfying that the space
\begin{center}
$\textnormal{Hom}_G (\textnormal{ind}^G _K \sigma, \pi)$
\end{center}
admits a \textbf{supersingular} Hecke eigenvalue for the center $\mathcal{Z}_G (\sigma)$ of the spherical Hecke algebra $\mathcal{H}(K, \sigma)$. Here, the assumption of admissibility, as far as we understand, can be replaced by the existence of Hecke eigenvalue. In our case $G= U(2, 1)$,

$a)$.~the existence of Hecke eigenvalue is proved in \cite{X2018}.

$b)$.~a character of $\mathcal{H}(K, \sigma)$ is \textbf{supersingular} if and only if it kills the maximal ideal $(T_\sigma)$ (\cite[III 4, Corollary]{AHHV17}).

\end{remark}

\subsection{The space $(\textnormal{ind}_K ^G \sigma)^{I_{1, K}}$}

Let $\sigma$ be a weight of $K$. Fix a non-zero vector $v_0\in \sigma^{I_{1, K}}$. Let $f_n$ be the function in $(\textnormal{ind}_K ^G \sigma)^{I_{1, K}}$, supported on $K \alpha^{-n} I_{1, K}$, such that
\begin{center}
$f_n (\alpha^{-n})=  \begin{cases}
\beta_K\cdot v_0, ~~~~~~~n>0,\\
v_0 ~~~~~~~~~~~~~~~~~n\leq 0.
\end{cases}$
\end{center}
Then, we have the following (\cite[Lemma 3.5]{X2016})

\begin{lemma}

The set of functions $\{f_n \mid  n\in \mathbb{Z}\}$ consists of a basis of the $I_{1, K}$-invariants of $\textnormal{ind}^{G} _K\sigma$.

\end{lemma}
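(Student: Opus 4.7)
The plan is to prove the basis statement by first establishing an explicit double-coset decomposition
\begin{equation*}
G \;=\; \bigsqcup_{n \in \mathbb{Z}} K\alpha^{-n} I_{1,K},
\end{equation*}
and then showing that, for each double coset, the space of $I_{1,K}$-invariant functions supported there is one-dimensional and spanned by the corresponding $f_n$.

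For the decomposition I would combine the Cartan decomposition $G = \bigsqcup_{n\geq 0} K\alpha^n K$ with the Bruhat decomposition $K = I_K \sqcup I_K\beta_K I_K$ recalled above. Since $I_K = H_0 \cdot I_{1,K}$ and $H_0 \subset K$ commutes with the diagonal element $\alpha$, the first Bruhat cell contributes $K\alpha^n I_{1,K}$; using the identity $\beta_K\alpha\beta_K^{-1} = \alpha^{-1}$ together with $\beta_K \in K$, the second cell contributes $K\alpha^{-n} I_{1,K}$. Disjointness for distinct indices reduces to the observation that $\alpha^{2n} \notin K$ for $n \geq 1$.

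Next, a standard analysis of compact induction shows that an $I_{1,K}$-invariant function supported on a single double coset $Kg I_{1,K}$ is determined uniquely by its value at $g$, and that this value may be any vector in $\sigma^{K\cap gI_{1,K}g^{-1}}$; summing over double cosets reduces the lemma to showing that $\sigma^{S_n}$ is one-dimensional and contains the prescribed value of $f_n$, where $S_n := K \cap \alpha^{-n} I_{1,K} \alpha^n$. Applying the Iwahori factorization $I_{1,K} = N'_{m_K} \cdot H_1 \cdot N_{n_K}$ and conjugating by $\alpha^{\mp n}$, one unipotent factor contracts while the other expands; the expanded factor is then truncated to $N \cap K$ or $N' \cap K$ upon intersecting with $K$. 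The decisive point is that the contracted factor always lies inside the pro-$p$ radical $K^1$, so that $S_n \cdot K^1$ equals $I_{1,K}$ when $n \leq 0$ and $I'_{1,K}$ when $n > 0$. Since $\sigma$ is inflated from $\Gamma_K = K/K^1$, this forces $\sigma^{S_n}$ to equal $\sigma^{I_{1,K}} = \overline{\mathbf{F}}_p \cdot v_0$ in the first case and $\sigma^{I'_{1,K}} = \overline{\mathbf{F}}_p \cdot \beta_K v_0$ in the second, each one-dimensional as required.

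The main technical obstacle is the case analysis used to verify the claim $S_n \cdot K^1 \in \{I_{1,K}, I'_{1,K}\}$: the integers $n_K, m_K$ and the shapes of the intersections $N \cap K$ and $N' \cap K$ differ between the hyperspecial case $K = K_0$ and the non-hyperspecial case $K = K_1$, so in each case one must check directly from the matrix descriptions of $K_0$ and $K_1$ that the contracted unipotent piece indeed lands in $K^1$ across the relevant ranges of $n$.
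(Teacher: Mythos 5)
The paper does not actually prove this lemma; it simply cites \cite[Lemma 3.5]{X2016}, so there is no internal argument to compare against. Your proposal is the standard argument one would expect that reference to contain, and its overall structure is sound: the decomposition of $G$ into $(K, I_{1,K})$-double cosets indexed by $\mathbb{Z}$, the Mackey-type identification of the invariants supported on $KgI_{1,K}$ with $\sigma^{K\cap gI_{1,K}g^{-1}}$, and the Iwahori-factorization computation showing $S_n K^1 = I_{1,K}$ for $n\leq 0$ and $S_n K^1 = I'_{1,K}$ for $n>0$. I checked the last step in both cases ($n_{K_0}=0$, $m_{K_0}=1$ and $n_{K_1}=-1$, $m_{K_1}=2$): the contracted unipotent factor does land in $N\cap K^1$ resp.\ $N'\cap K^1$, so $\sigma^{S_n}$ is the one-dimensional line $\overline{\mathbf{F}}_p v_0$ resp.\ $\overline{\mathbf{F}}_p\,\beta_K v_0$, matching the prescribed values of $f_n$.

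Two points in the first step are stated more loosely than they should be. First, the second Bruhat cell does not contribute only $K\alpha^{-n}I_{1,K}$: writing $I_K\beta_K I_K = \bigsqcup_u u\beta_K I_K$ with $u$ running over $N_{n_K}/N_{n_K+1}$, the coset $u=1$ gives $K\alpha^n\beta_K I_K = K\alpha^{-n}I_K$, but the cosets with $u\in N_{n_K}\setminus N_{n_K+1}$ satisfy $\alpha^n u\beta_K\in K\alpha^n I_{1,K}$ (this is \cite[Lemma 3.7(3)]{X2017}, used repeatedly in the paper), so they fall back into the \emph{first} cell's contribution. The union $K\alpha^nK = K\alpha^nI_{1,K}\cup K\alpha^{-n}I_{1,K}$ is still correct, which is all you need. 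Second, disjointness of $K\alpha^nI_{1,K}$ and $K\alpha^{-n}I_{1,K}$ for $n\geq 1$ requires slightly more than $\alpha^{2n}\notin K$: you must rule out $\alpha^{-n}\in K\alpha^n I_K$, i.e.\ $\alpha^{-n}i\alpha^{-n}\in K$ for every $i\in I_K$, which one checks from the Iwahori factorization of $i$ (the $N$-part of $\alpha^{-n}i\alpha^{-n}$ leaves $K$ unless it is trivial, and then one is reduced to your observation). Neither issue affects the validity of the proof; they are gaps in precision rather than in substance.
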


\section{$\text{dim}_{\overline{\mathbf{F}}_p} \pi^{I_{1, K}}\geq 2$ for supersingular $\pi$: a conjecture}\label{sec: conj}

\subsection{Some motivational remarks}\label{subsec: motivation}
The simple modules of the pro-$p$-Iwahori--Hecke algebra $\mathcal{H}(I_{1, K}, 1)$ of $G$ were classified in \cite{Karol-Peng2012}, and they are either one or two dimensional. The so-called simple \emph{supersingular} modules are all one-dimensional characters, and all non-supersingular simple modules arise from the $I_{1, K}$-invariants of irreducible subquotients of principal series. The goal of this paper is to prove that the subspace of $I_{1, K}$-invariants of certain supersingular representation $\pi$ is at least two dimensional.

To address our result, we remind the readers of a simple but important fact from classical case (i.e., complex case). The following is \cite[Proposition 4.3, (2)]{BH2006}:

\begin{proposition}\label{simple fact}
For a locally profinite group $\mathcal{G}$ and a compact open subgroup $\mathcal{K}$, the process $\pi \mapsto \pi^{K}$ induces a bijection between the following two sets:

$a)$.~equivalence classes of irreducible smooth representations $\pi$ of $\mathcal{G}$ such that $\pi^{\mathcal{K}}\neq 0$.

$b)$.~isomorphic classes of simple $\mathcal{H} (\mathcal{K}, 1)$-modules.
\end{proposition}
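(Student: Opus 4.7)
The plan is to establish the bijection via the standard adjunction formalism built around the compactly induced representation $\textnormal{ind}^{\mathcal{G}}_{\mathcal{K}} 1$. Frobenius reciprocity yields a natural isomorphism of functors $\pi^{\mathcal{K}} \cong \textnormal{Hom}_{\mathcal{G}}(\textnormal{ind}^{\mathcal{G}}_{\mathcal{K}} 1, \pi)$, and precomposition with endomorphisms of $\textnormal{ind}^{\mathcal{G}}_{\mathcal{K}} 1$ equips the right-hand side with its right $\mathcal{H}(\mathcal{K},1)$-module structure. From here I would split the proof into three steps: (i) the map sends irreducibles with non-zero invariants into simple $\mathcal{H}(\mathcal{K},1)$-modules, (ii) the map is injective, (iii) the map is surjective.

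For (i), let $\pi$ be irreducible smooth with $\pi^{\mathcal{K}} \neq 0$ and pick any $0 \neq v \in \pi^{\mathcal{K}}$, corresponding to a $\mathcal{G}$-homomorphism $\phi_v \colon \textnormal{ind}^{\mathcal{G}}_{\mathcal{K}} 1 \to \pi$ which is surjective by irreducibility. Given any other $w \in \pi^{\mathcal{K}}$ with associated $\phi_w$, I would lift $\phi_w$ along $\phi_v$ to produce $\eta \in \mathcal{H}(\mathcal{K},1) = \textnormal{End}_{\mathcal{G}}(\textnormal{ind}^{\mathcal{G}}_{\mathcal{K}} 1)$ such that $\phi_w = \phi_v \circ \eta$; this translates under Frobenius reciprocity to $w = v \cdot \eta$. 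The existence of the lift relies on the fact that $\ker \phi_v$ is generated as a $\mathcal{G}$-subrepresentation by its own $\mathcal{K}$-invariants (a property of subrepresentations of $\textnormal{ind}^{\mathcal{G}}_{\mathcal{K}} 1$ inherited from its free-module structure over the corresponding Hecke algebra), and those invariants already lie in $\ker \phi_w$ by the choice of $v$ and $w$. Consequently $v$ generates $\pi^{\mathcal{K}}$ over $\mathcal{H}(\mathcal{K},1)$, so $\pi^{\mathcal{K}}$ is simple.

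For (iii), let $M$ be a simple right $\mathcal{H}(\mathcal{K},1)$-module. I would form the smooth $\mathcal{G}$-representation $\tilde{\pi}(M) := M \otimes_{\mathcal{H}(\mathcal{K},1)} \textnormal{ind}^{\mathcal{G}}_{\mathcal{K}} 1$, where $\textnormal{ind}^{\mathcal{G}}_{\mathcal{K}} 1$ carries its canonical left action of $\mathcal{H}(\mathcal{K},1)$ by evaluation. A canonical copy of $M$ embeds into $\tilde{\pi}(M)^{\mathcal{K}}$; a Zorn's lemma argument then produces a unique maximal subrepresentation $N_0 \subset \tilde{\pi}(M)$ with $N_0 \cap M = 0$, and I set $\pi(M) := \tilde{\pi}(M)/N_0$. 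For (ii), if two irreducibles $\pi_1, \pi_2$ satisfy $\pi_1^{\mathcal{K}} \cong \pi_2^{\mathcal{K}} \cong M$, both arise as quotients of $\tilde{\pi}(M)$ by subrepresentations maximal with respect to trivial intersection with $M$; uniqueness of such a maximal quotient forces $\pi_1 \cong \pi_2$.

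The principal obstacle is verifying in (iii) that $\pi(M)$ is actually irreducible and that $\pi(M)^{\mathcal{K}}$ equals $M$ rather than something strictly larger. The argument I would run: any non-zero subrepresentation $V \subseteq \pi(M)$ must satisfy $V \cap M \neq 0$, for otherwise the preimage of $V$ in $\tilde{\pi}(M)$ would strictly enlarge $N_0$ while still missing $M$, contradicting maximality. Once $V \cap M \neq 0$, simplicity of $M$ forces $V \supseteq M$, and since $M$ generates $\tilde{\pi}(M)$ as a $\mathcal{G}$-module (and hence generates $\pi(M)$), we conclude $V = \pi(M)$. The same maximality of $N_0$, combined with simplicity of $M$, rules out the existence of extra $\mathcal{K}$-fixed vectors beyond $M$ in the quotient, completing the argument.
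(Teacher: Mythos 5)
The paper does not prove this statement at all: it is quoted verbatim from Bushnell--Henniart, \emph{loc.cit.} Proposition 4.3(2), and is explicitly framed as a fact about the \emph{classical} (complex) case. The standard proof there runs entirely through the idempotent $e_{\mathcal{K}}=\mu(\mathcal{K})^{-1}\mathbf{1}_{\mathcal{K}}$ of the global Hecke algebra, which exists because the pro-order of $\mathcal{K}$ is invertible in the coefficient field; this is what makes $(-)^{\mathcal{K}}=e_{\mathcal{K}}(-)$ exact and $\textnormal{ind}^{\mathcal{G}}_{\mathcal{K}}1$ projective. Your argument never invokes any hypothesis on the coefficient field, so it would apply just as well to $\overline{\mathbf{F}}_p$-representations with $\mathcal{K}$ a pro-$p$ group --- where the statement is \emph{false}, as this very paper demonstrates (Corollary \ref{non-simpleness}). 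A characteristic-free proof therefore cannot be correct, and one can locate exactly where the missing hypothesis is needed.

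In step (i), the existence of $\eta$ with $\phi_w=\phi_v\circ\eta$ is precisely the assertion that $\textnormal{Hom}_{\mathcal{G}}(\textnormal{ind}^{\mathcal{G}}_{\mathcal{K}}1,-)$ carries the surjection $\phi_v$ to a surjection, i.e.\ projectivity of $\textnormal{ind}^{\mathcal{G}}_{\mathcal{K}}1$; this is where $e_{\mathcal{K}}$ is indispensable. Your substitute justification fails on two counts. First, there is no reason that $(\ker\phi_v)^{\mathcal{K}}\subseteq\ker\phi_w$: under Frobenius reciprocity $(\ker\phi_v)^{\mathcal{K}}$ and $\ker\phi_w\cap(\textnormal{ind}^{\mathcal{G}}_{\mathcal{K}}1)^{\mathcal{K}}$ correspond (in the classical case) to the annihilators of $v$ and $w$ in $\mathcal{H}(\mathcal{K},1)$, which differ as soon as $v$ and $w$ are not proportional. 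Second, even granting both of your premises you would conclude $\ker\phi_v\subseteq\ker\phi_w$, hence $\phi_w=\psi\circ\phi_v$ for some $\psi\in\textnormal{End}_{\mathcal{G}}(\pi)$ --- a factorization through the \emph{target}, not an element of $\textnormal{End}_{\mathcal{G}}(\textnormal{ind}^{\mathcal{G}}_{\mathcal{K}}1)$; by Schur's lemma this would force $w$ proportional to $v$ and hence $\dim\pi^{\mathcal{K}}=1$, which is absurd. Step (iii) has the same disease twice more: the family of subrepresentations meeting $M$ trivially is not closed under sums (so Zorn gives maximal elements but not a \emph{unique} maximal one --- classically one uses that $e_{\mathcal{K}}(N_1+N_2)=e_{\mathcal{K}}N_1+e_{\mathcal{K}}N_2$ to fix this), and the claim $\pi(M)^{\mathcal{K}}=M$ requires exactness of invariants, which is exactly what fails for pro-$p$ $\mathcal{K}$ in characteristic $p$. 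To repair the proof you must restrict to the classical setting and route every one of these steps through the idempotent $e_{\mathcal{K}}$.
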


\medskip
As far as we know, the pro-$p$-Iwahori invariants of supersingular representations are only fully understood for the group $GL_2 (\mathbf{Q}_p)$ (\cite{Breuil03}) and some closely related cases, and in these cases the pro-$p$-Iwahori invariants are still simple modules; beyond $GL_2 (\mathbf{Q}_p)$, very little is known. For the group $GL_2 (\mathbf{Q}_{p^f})$ ($f> 1$), certain supersingular representation constructed by Breuil and Pa$\check{\text{s}}$k$\bar{\text{u}}$nas in \cite{BP12} has been proved in the recent work of Emerton--Gee--Savitt (\cite{EGS15}) that it appears in the cohomology of a suitable Shimura curve, and that its pro-$p$-Iwahori invariants matches the original recipe used to construct it. To our knowledge, that is the only case beyond $GL_2 (\mathbf{Q}_p)$ for a single supersingular representation its pro-$p$-Iwahori invariants is \emph{known}. In all, these work provide the first examples that the analogue of Proposition \ref{simple fact} above fail in the $p$-modular representation theory.

Our main input (Theorem \ref{main theorem for st}) proved in this paper, combined something natural, that is the pro-$p$-Iwahori invariants of an admissible supersingular representation \emph{only} admits supersingular subquotient \footnote{This is indeed a theorem of Ollivier--Vign$\acute{\text{e}}$ras \cite{OV2017} now, if one assumes admissibility.}, shows that the analogue of Proposition \ref{simple fact} fails wildly for supersingular representations of $G$, or possibly \emph{always} fails.

\subsection{$\text{dim}_{\overline{\mathbf{F}}_p} \pi^{I_{1, K}}\geq 2$ for any supersingular $\pi$ ?}

We propose the following:

\begin{conjecture}\label{main conjeture}
Let $\sigma$ be a weight of $K$. Then, for any irreducible quotient $\pi$ of $\textnormal{ind}^G _K \sigma/ (T_\sigma)$, the images of the functions $f_0$ and $f_1$ in $\pi$ are \textbf{not} proportional.
\end{conjecture}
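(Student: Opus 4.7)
My plan is to argue by contradiction: assume there exist $(a,b)\in \overline{\mathbf{F}}_p^2\setminus\{0\}$ with $a\bar f_0 + b\bar f_1 = 0$ in $\pi$, where $\bar f_i$ denotes the image. A first observation shortcuts one direction: since $f_0$ spans the line $\sigma^{I_{1,K}}$ inside $\mathrm{ind}_K^G \sigma$, its $K$-translates recover the whole copy of the irreducible $K$-representation $\sigma$, and by Frobenius reciprocity $\sigma$ $G$-generates $\mathrm{ind}_K^G\sigma$; consequently $\bar f_0 = 0$ forces $\pi = 0$. We may therefore suppose $b\neq 0$ and rewrite the relation as $\bar f_1 = \lambda \bar f_0$ for some $\lambda \in \overline{\mathbf{F}}_p$. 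The task becomes to rule out every such $\lambda$.

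The main tool will be the operators studied in Section \ref{sec: the image of pro-p under two operators}, which describe how $(\mathrm{ind}_K^G \sigma)^{I_{1,K}}$ transforms under two explicit group elements. Combined with the action of the Hecke operator $T$ on the basis $\{f_n\}_{n\in\mathbb{Z}}$ (the coset-sum formula recorded in \cite[(4)]{X2016}), these provide three-term relations among the $f_n$ that, once we impose $T_\sigma = T - c_\sigma \equiv 0$ with $c_\sigma = \sum_{(x,t)\in L_{q^{4-t_K}}^\times}\chi_\sigma(h(t))$, take the form of a constant-coefficient linear recurrence. Propagating the hypothesis $\bar f_1 = \lambda \bar f_0$ through this recurrence, one aims to show that every $\bar f_n$ is a scalar multiple of $\bar f_0$, so that $\pi^{I_{1,K}}$ is one-dimensional; then, combining the classification of simple $\mathcal{H}(I_{1,K},1)$-modules in \cite{Karol-Peng2012} with the existence of a supersingular Hecke eigenvalue established in \cite{X2018}, the admissible values of $\lambda$ can be enumerated and each shown to be incompatible with the derived recursion.

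The principal obstacle is the weight-dependence of the correction $c_\sigma$. For regular $\sigma$ the sum vanishes and the recurrence is clean; for degenerate $\sigma$ (the trivial type, or the hybrid type with $K$ hyperspecial) Remark \ref{value of the sum in T_sigma} gives $c_\sigma = -\chi_\sigma(h(\mathfrak t))$, and this inhomogeneity must be tracked with care through the propagation. The Steinberg weight (where $\chi_\sigma = 1$) sits in a sweet spot: it is degenerate, yet the triviality of $\chi_\sigma$ forces enough algebraic identities among the coefficients of the recurrence that the argument closes, which is presumably what Section \ref{sec: proof of main results} exploits. A uniform argument for all weights will likely require a cleaner description of the action of the two operators of Section \ref{sec: the image of pro-p under two operators} on $\{f_n\}$, one that does not degenerate when passing between the trivial, hybrid, and generic types of $\chi_\sigma$; producing such a description is, in my view, the main technical challenge toward settling the conjecture in full.
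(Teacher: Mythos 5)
Your proposal correctly identifies the paper's overall strategy --- argue by contradiction from a relation $a\bar f_0+b\bar f_1=0$, note that $\bar f_0\neq 0$ because $f_0$ generates, and propagate the relation through the operators of Section \ref{sec: the image of pro-p under two operators} together with the Hecke relation coming from $T_\sigma$. But as written it is a plan, not a proof, and the statement itself is a conjecture that the paper only establishes for the Steinberg weight (Theorem \ref{main theorem for st}); so the relevant comparison is with that case, and there your outline stops short of every step that actually makes the argument close.

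Concretely, the gaps are these. First, you never rule out $\lambda=0$, i.e.\ $\bar f_1=0$: the paper needs Proposition \ref{nonvanishing of f_1}, which uses the degeneracy of the Steinberg weight (so that $T_\sigma=T+\chi_\sigma(h(\mathfrak t))$ has a nonzero constant term) together with $S_Kf_1=f_{-1}$ to show $\bar f_1\neq 0$; this nonvanishing is also what lets one divide by $\bar f_1$ later. Second, the ``recurrence'' you invoke has to be instantiated with the explicit values from Proposition \ref{the image of I_1 under S_K and S_-} ($d_0=-1$, $c_-=-1$ for Steinberg, $S_Kf_1=f_{-1}$, $S_-f_0=f_1$, $S_-f_{-1}=f_2$) and with $Tf_1=f_2$; running $S_-\circ S_K$ on $f_0+cf_1$ and substituting $f_2=(T+1)f_1-f_1=T_\sigma f_1-f_1$ pins down $c=-1$, and then a single further application of $S_-$ to $f_0-f_1$ yields $2\bar f_1=0$, which is the contradiction (and uses $p\neq 2$). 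None of this is in your write-up. Third, your proposed endgame --- show $\pi^{I_{1,K}}$ is one-dimensional and then enumerate admissible $\lambda$ via the classification of simple $\mathcal H(I_{1,K},1)$-modules --- is a genuinely different (and heavier) route than the paper's direct numerical contradiction; it is not implausible, but you would need to justify that the resulting character of the Hecke algebra is incompatible with the supersingular eigenvalue, which you do not do. Your closing paragraph honestly flags that the general case is open; that assessment agrees with the paper, but it also means your proposal does not constitute a proof of the stated conjecture, nor even of the Steinberg case the paper settles.
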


We end this part by some general but more related comments.

$1)$.~The images of the functions $f_0$ and $f_1$ are linearly independent in $\textnormal{ind}^G _K \sigma/(T_\sigma)$ (\cite[Corollary 2.10]{X2017}).

$2)$.~An analogue of Conjecture \ref{main conjeture} for $GL_2$ holds and is indeed implicitly verified in \cite{B-L94}. See Theorem \ref{GL_2} in Section \ref{sec: appendix for GL_2}.

$3)$.~Our strategy to prove Conjecture \ref{main conjeture} is naturally by contradiction. Assume there is some $c\in \overline{\mathbf{F}}^\times _p$ such that
\begin{center}
the image of $f_0 + c\cdot f_1$ in $\pi$ is zero.
\end{center}
Then starting from such a function we proceed to generate more functions whose images in $\pi$ are still zero, and when $\sigma$ is the Steinberg weight we find a contradiction after we take some care in the process.

\section{The images of $(\textnormal{ind}_K ^G \sigma)^{I_{1, K}}$ under $S_K$ and $S_-$}\label{sec: the image of pro-p under two operators}

For a smooth representation $\pi$ of $G$, we have introduced two partial linear maps $S_K$ and $S_-$ in \cite[subsection 4.3]{X2017} as follows:
\begin{center}
$S_K: \pi^{N'_{m_K}} \rightarrow \pi^{N_{n_K}}$,

$v \mapsto \sum_{u \in N_{n_K}/N_{n_K +1}}~u \beta_K v$.
\end{center}

\begin{center}
$S_-: \pi^{N_{n_K}} \rightarrow \pi^{N'_{m_K}}$,

$v\mapsto \sum_{u'\in N' _{m_K}/N' _{m_K +1}}~u'\beta_K\alpha^{-1}v$
\end{center}
We have proved these two maps satisfy the following nice property:
\begin{proposition}\label{S_K and S_- preserve I_1}
If $v\in \pi^{I_{1, K}}$, then it is the same for $S_K v$ and $S_- v$.
\end{proposition}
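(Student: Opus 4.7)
\medskip
\noindent\textbf{Proof proposal.} The plan is to realize $S_K v$ and $S_- v$ as images of $v$ under Hecke operators in the pro-$p$-Iwahori--Hecke algebra $\mathcal{H}(I_{1, K}, 1)$. For any $g \in G$, the Hecke operator $T_g$ attached to the double coset $I_{1, K} g I_{1, K}$ acts on $v \in \pi^{I_{1, K}}$ by $T_g v = \sum_x xv$, where $x$ runs over any system of representatives for the left cosets $I_{1, K} g I_{1, K}/I_{1, K}$. The resulting element lies in $\pi^{I_{1, K}}$: for any $y \in I_{1, K}$, the translates $\{yx\}$ again form a system of left-coset representatives up to absorbing $I_{1, K}$-factors that fix $v$, so $y \cdot T_g v = T_g v$.

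I would then identify $S_K v = T_{\beta_K} v$ and $S_- v = T_{\beta_K \alpha^{-1}} v$ by exhibiting $\{u \beta_K : u \in N_{n_K}/N_{n_K + 1}\}$ and $\{u' \beta_K \alpha^{-1} : u' \in N'_{m_K}/N'_{m_K + 1}\}$ as systems of left coset representatives for $I_{1, K} \beta_K I_{1, K}/I_{1, K}$ and $I_{1, K} \beta_K \alpha^{-1} I_{1, K}/I_{1, K}$ respectively. The verification uses the Iwahori factorization $I_{1, K} = N_{n_K} H_1 N'_{m_K}$, together with the conjugation rules that $H_1$ is normalized by $\beta_K$ and commutes with $\alpha$, that $\beta_K$ swaps $N$ and $N'$ preserving filtration indices ($\beta_K^{-1} N_k \beta_K = N'_k$, $\beta_K^{-1} N'_k \beta_K = N_k$ in the hyperspecial model, with the analogous statement for $K_1$), and that $\alpha$ shifts $N'_k$ upward and $N_k$ downward by $2$ under conjugation. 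Combined with $n_K + m_K = 1$, this lets me absorb the $H_1$- and $N'_{m_K}$-factors of $I_{1, K}$ past $\beta_K$ (resp.\ past $\beta_K \alpha^{-1}$) into $\beta_K I_{1, K}$, reducing $I_{1, K} g I_{1, K}$ to $N_{n_K} \beta_K I_{1, K}$ (resp.\ $N'_{m_K} \beta_K \alpha^{-1} I_{1, K}$).

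The coset equivalence $u_1 g \equiv u_2 g \pmod{I_{1, K}}$ then translates, via conjugation by $g^{-1}$, into the condition that $u_1^{-1} u_2$ lies in a deeper piece of the relevant filtration: for $g = \beta_K$ it reduces to $\beta_K (u_1^{-1} u_2) \beta_K \in I_{1, K} \cap N' = N'_{m_K}$, hence to $u_1^{-1} u_2 \in N_{n_K + 1}$; for $g = \beta_K \alpha^{-1}$ the additional conjugation by $\alpha^{\pm 1}$ shifts the index by $2$ and forces $u_1^{-1} u_2 \in N'_{m_K + 1}$. This matches the sums defining $S_K v$ and $S_- v$, so the two identifications hold and the proposition follows.

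The main obstacle is the filtration bookkeeping: one has to verify that the specific conjugates $\alpha^{\pm 1} \beta_K (\cdot) \beta_K \alpha^{\mp 1}$ land in the correct pieces of $N'_\bullet$ or $N_\bullet$, using the explicit matrix forms of $\alpha$ and $\beta_K$ recorded in Section~\ref{sec: notations and pre}, and this has to be carried out uniformly in both the hyperspecial case $K = K_0$ with $(n_K, m_K) = (0, 1)$ and the non-hyperspecial case $K = K_1$. A more elementary alternative would be to bypass the Hecke algebra and check directly that $S_K v$ and $S_- v$ are fixed by each factor of the Iwahori factorization $N_{n_K} H_1 N'_{m_K}$; invariance under the ``same side'' unipotent and under $H_1$ is immediate by reindexing the sum, whereas invariance under the opposite unipotent is exactly the nontrivial content and uses the same conjugation computations as above.
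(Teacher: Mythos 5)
Your proposal is correct in substance, but note that the paper itself gives no argument here: its ``proof'' is a single citation to \cite[(2) of Proposition 4.10]{X2017}, so there is no in-text argument to compare against. Your route --- identifying $S_K v = T_{\beta_K} v$ and $S_- v = T_{\beta_K\alpha^{-1}} v$ for $v \in \pi^{I_{1,K}}$ via explicit left-coset decompositions of $I_{1,K}\beta_K I_{1,K}$ and $I_{1,K}\beta_K\alpha^{-1} I_{1,K}$, and then invoking the standard fact that double-coset operators preserve $I_{1,K}$-invariants --- is the expected proof and it does go through. The coset decompositions you assert are the right ones: writing $I_{1,K} = N_{n_K}H_1 N'_{m_K}$ (resp.\ $N'_{m_K}H_1 N_{n_K}$), the factors $H_1$ and $N'_{m_K}$ (resp.\ $N_{n_K}$) are absorbed past $\beta_K$ (resp.\ $\beta_K\alpha^{-1}$) into $I_{1,K}$, and the equivalence on the remaining unipotent factor is exactly modulo $N_{n_K+1}$ (resp.\ $N'_{m_K+1}$), matching the sums defining $S_K$ and $S_-$.

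One point of caution in your bookkeeping: your parenthetical ``$\beta_K^{-1}N_k\beta_K = N'_k$ \dots with the analogous statement for $K_1$'' hides the only place where something could go wrong. For $K = K_1$ one has $\beta_{K_1} = \beta\alpha^{-1}$, so conjugation by $\beta_{K_1}$ does \emph{not} preserve filtration indices but shifts them by $2$ (e.g.\ $\beta_{K_1}^{-1}N'_k\beta_{K_1} = N_{k-2}$), and moreover the correct indices in that case are $n_{K_1} = -1$, $m_{K_1} = 2$ (consistent with $n_K + m_K = 1$ and with $N_{n_K}/N_{n_K+1}$ having order $q^{t_K}$ with $t_{K_1}=1$), not $(1,0)$. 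With those values the absorption and equivalence conditions do check out in both cases --- e.g.\ for $K_1$ one gets $\beta_{K_1}^{-1}N'_2\beta_{K_1} = N_0 \subseteq N_{-1}$ and the coset equivalence cutoff $N_0 = N_{n_{K_1}+1}$ --- but you should carry out this verification explicitly rather than by analogy, since the numerology is genuinely different from the hyperspecial case. Your suggested elementary alternative (checking invariance of $S_K v$ and $S_- v$ factor by factor in the Iwahori factorization) is equivalent and requires the same conjugation computations.
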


\begin{proof}
This is \cite[(2) of Proposition 4.10]{X2017}.
\end{proof}

Our main goal in this section is to apply the maps $S_K$ and $S_-$ to the space $(\textnormal{ind}_K ^G \sigma)^{I_{1, K}}$ and determine explicitly the images. It is done in the following Proposition.

\begin{proposition}\label{the image of I_1 under S_K and S_-}
We have:

$(1)$.~For $n\geq 1$,
\begin{center}
$S_K f_n = f_{-n}$, ~$S_- f_n = c_- f_n$.
\end{center}
Here, the constant $c_-$ is given by:
\begin{center}
$c_-= \sum_{(x, t)\in L^\times _{q^{4-t_K}}}\chi_\sigma (h(t))$.
\end{center}

$(2)$.~For $n \geq 0$,
\begin{center}
$S_K f_{-n}= d_n f_{-n}$, ~$S_- f_{-n}= f_{n+1}$.
\end{center}
Here, the constant $d_n$ $(n\geq 1)$ is given by:
\begin{center}
$d_n =\sum_{(x, t)\in L^\times _{q^{t_K}}} \chi_\sigma ((h(t))$;
\end{center}
and the constant $d_0$ is equal to:
\begin{center}
$d_0= \begin{cases}
-\chi_\sigma (h(\mathfrak{t})), ~~\text{if}~\sigma\cong~\text{a twist of the Steinberg weight}
;\\
0, ~~~~~~~~~~~~~~~~~~\text{otherwise}.\end{cases}$
\end{center}
\end{proposition}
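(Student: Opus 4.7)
The plan is to verify each formula by direct computation, using the right-translation action $(g \cdot f)(x) = f(xg)$ on $\textnormal{ind}^G_K \sigma$ and carefully tracking the support of each resulting function. By Proposition \ref{S_K and S_- preserve I_1}, each of $S_K f_n$, $S_- f_n$, $S_K f_{-n}$, $S_- f_{-n}$ lies in $(\textnormal{ind}^G_K\sigma)^{I_{1,K}}$, hence is a linear combination of the $f_m$'s. It suffices therefore to determine which $f_m$'s occur by computing the support of the resulting function, and to read off the coefficient by evaluating at the single test point $\alpha^{-m}$ in that $K$-$I_{1,K}$ double coset.

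For $n \geq 1$, I would evaluate $(S_K f_n)(\alpha^n)$: using the relation $\alpha^n \beta_K = \beta_K \alpha^{-n}$ together with Bruhat/Iwahori factorizations of the terms $\alpha^n u \beta_K$, only one term of $\sum_u f_n(\alpha^n u\beta_K)$ falls into the support $K\alpha^{-n} I_{1,K}$, and it evaluates to $v_0$; this yields $S_K f_n = f_{-n}$. For $S_- f_n$, the analogous manipulation with $\alpha^{-1}$ produces a sum indexed by $N'_{m_K}/N'_{m_K+1}$; the Iwahori factorization of each term extracts a diagonal component $h(t)$ on which $\sigma$ acts by $\chi_\sigma(h(t))$, and via the identification $L_{m_K}$ the sum collapses to the character sum defining $c_-$. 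The remaining formulas of part (2) for $S_- f_{-n}$ ($n \geq 0$) and $S_K f_{-n}$ ($n \geq 1$, with the sum running over $L^\times_{q^{t_K}}$) follow by entirely symmetric computations, exchanging the roles of $N$ and $N'$ and of $t_K$ and $4-t_K$.

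The main obstacle is the computation of $d_0$ in the case $S_K f_0$. Here $f_0$ is supported on all of $K$, so the Iwahori-shift tricks of the previous paragraph fail and the entire sum $(S_K f_0)(1) = \sum_{u\in N_{n_K}/N_{n_K+1}} \sigma(u\beta_K) v_0$ contributes. The summation $v \mapsto \sum_u \sigma(u) v$ acts on $\sigma$ as the canonical projector onto $\sigma^{I_{1,K}}$, realizing $j_\sigma$ applied to the image of $v$ in $\sigma_{I'_{1,K}}$, so $d_0 \cdot v_0 = j_\sigma\bigl(\overline{\sigma(\beta_K) v_0}\bigr)$. For most weights, $\sigma(\beta_K) v_0$ already lies in a line whose image in $\sigma_{I'_{1,K}}$ vanishes, forcing $d_0 = 0$. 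When $\sigma$ is a twist of the Steinberg weight, the explicit realization of $\textnormal{St}$ as the quotient of $\textnormal{ind}_\mathbb{B}^{\Gamma_K} 1$ by the trivial permits a direct computation of $\sigma(\beta_K) v_0$ and of its image in $\sigma_{I'_{1,K}}$, and yields the coefficient $-\chi_\sigma(h(\mathfrak{t}))$. The delicate point is to pin down exactly the weights that produce this nonzero case, which will require a case-by-case check against the classification of weights of $\Gamma_K$ recorded in the appendix to \cite{Karol-Peng2012}; the argument is essentially the same mechanism responsible for the degenerate case in Remark \ref{value of the sum in T_sigma}, only now detected through $j_\sigma$ rather than through the Hecke eigenvalue for $T$.
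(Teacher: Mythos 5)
Your computations for $S_K f_n$, $S_- f_n$ ($n\geq 1$) and $S_K f_{-n}$ ($n\geq 1$), $S_- f_{-n}$ ($n\geq 0$) follow exactly the paper's route: restrict the support to one or two double cosets, evaluate at a single test point, and use the Iwahori factorization of $\alpha^{\pm n}u\beta_K$ to extract the diagonal term $h(t)$ and hence the character sums defining $c_-$ and $d_n$. That part is fine.

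The gap is in your computation of $d_0$. The identity you rely on --- that $v\mapsto\sum_{u\in N_{n_K}/N_{n_K+1}}\sigma(u)v$ ``realizes $j_\sigma$ applied to the image of $v$ in $\sigma_{I'_{1,K}}$'', so that $d_0\,v_0=j_\sigma\bigl(\overline{\sigma(\beta_K)v_0}\bigr)$ --- is false. The operator $P=\sum_{u\in\mathbb{U}}\sigma(u)$ factors through the coinvariants for $\mathbb{U}$ (the \emph{upper} unipotent radical), not for $\mathbb{U}'$, and it is not the inverse of the Cabanes--Enguehard isomorphism $\sigma^{I_{1,K}}\to\sigma_{I'_{1,K}}$ (which is not even idempotent territory: $P^2=|\mathbb{U}|P=0$ in characteristic $p$). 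A concrete counterexample is the trivial character: there $\sigma(\beta_K)v_0=v_0$ has non-zero image in $\sigma_{I'_{1,K}}=\sigma$, so your formula predicts $d_0=1$, whereas in fact $\sum_u u\beta_K v_0=q^{t_K}\,v_0=0$ and $d_0=0$, as the proposition (and the paper's remark that $d_0=0$ for any character) asserts. Consequently your criterion ``$\sigma(\beta_K)v_0$ has vanishing image in $\sigma_{I'_{1,K}}$ for most weights'' does not detect the correct dichotomy, and the case-by-case check you defer to would come out wrong. The correct mechanism, and the one the paper uses, is to observe that $\sum_{u\in\mathbb{U}}\sigma(u\beta_K)$ acting on the line $\sigma^{\mathbb{U}}$ is precisely the action of the finite Hecke operator $T_{\beta_K}\in\mathcal{H}_{\Gamma_K}=\mathrm{End}_{\Gamma_K}(\mathrm{Ind}^{\Gamma_K}_{\mathbb{U}}1)$ on the simple module $\sigma^{\mathbb{U}}\cong M_{\chi_\sigma,J}$, so that $d_0=M_{\chi_\sigma,J}(T_{\beta_K})$; the value is then read off from the classification of these characters in \cite{Karol-Peng2012} (Definitions 5.2--5.3, Propositions 5.4--5.5 and 5.7), which is non-zero exactly for twists of the Steinberg weight and equals $-\chi_\sigma(h(\mathfrak{t}))$ once one accounts for $\beta_K$ differing from the normalized representative by $h(\mathfrak{t})$.
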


\begin{proof}
We will prove $S_K f_n = f_{-n}$ for $n\geq 1$ and $S_- f_{-n}= f_{n+1}$ for $n \geq 0$ at first.

For $n\geq 1$, the support of the function $S_K f_n$ is contained in:
\begin{center}
$K\alpha^{-n}I_{1, K}\beta_K N_{n_K} =K\alpha^n I_{1, K}$.
\end{center}
Then, by Proposition \ref{S_K and S_- preserve I_1} and \cite[Remark 3.8]{X2016}, the function $S_K f_n$ is proportional to $f_{-n}$. We compute:
\begin{center}
$S_K f_n (\alpha^n)= \sum_{u \in N_{n_K}/N_{n_K +1}}~f_n(\alpha^n u\beta_K)= f_n (\alpha^n \beta_K)= v_0$,
\end{center}
where we note that $\alpha^n u\beta_K \in K\alpha^n I_{1, K}$, for $u\in N_{n_K}\setminus N_{n_K +1}$ (\cite[Lemma 3.7 (3)]{X2017}). Hence, we have proved $S_K f_n= f_{-n} $ for $n\geq 1$.

\smallskip
For $n\geq 0$, the support of the function $S_- f_{-n}$ is contained in
\begin{center}
$K\alpha^n I_{1, K}\beta_K \alpha^{-1}N'_{m_K}= K\alpha^{-(n+1)}I_{1, K}$.
\end{center}
 By Proposition \ref{S_K and S_- preserve I_1} and \cite[Remark 3.8]{X2016} again, the function $S_- f_{-n}$ is proportional to $f_{n+1}$. We compute:
\begin{center}
$S_- f_{-n} (\alpha^{-(n+1)})= \sum_{u' \in N'_{m_K}/N'_{m_K +1}}~f_{-n} (\alpha^{-(n+1)}u'\alpha\beta_K)= \beta_K v_0$,
\end{center}
where we note that $\alpha^{-(n+1)}u'\alpha\beta_K \in K\alpha^{n+1} K$, for $u'\in N'_{m_K} \setminus N'_{m_K +1}$ (\cite[Lemma 3.7 (2), (3)]{X2017}). Thus, we have verified $S_- f_{-n}= f_{n+1}$, for $n\geq 0$.

\smallskip

We proceed to prove $S_- f_n =c_- f_n$ for $n\geq 1$; actually we will determine the value of $c_-$ explicitly. The support of the function $S_- f_n$ is contained in
\begin{center}
$K\alpha^{-n}I_{1, K} \alpha \beta_K N'_{m_K} \subseteq K \alpha^{n-1} I_{1, K} \cup K \alpha^n I_{1, K}$,
\end{center}
where the inclusion follows from \cite[Lemma 3.7 (1), (3)]{X2017}. We conclude that $S_- f_n \in \langle f_{-(n-1)}, f_n\rangle$ by Proposition \ref{S_K and S_- preserve I_1} and \cite[Remark 3.8]{X2016}. We compute:
\begin{center}
$S_- f_n (\alpha^{n-1})=\sum_{u'\in N' _{m_K}/N' _{m_K +1}} f_n (\alpha^{n-1}u'\alpha \beta_K)= \sum_{u'\in N' _{m_K}/N' _{m_K +1}} v_0 =0.$
\end{center}
It remains to compute $S_- f_n (\alpha^{-n})$:
\begin{center}
$S_- f_n (\alpha^{-n})=\sum_{u'\in N' _{m_K}/N' _{m_K +1}} f_n (\alpha^{-n} u'\alpha \beta_K)$.
\end{center}
Note that $\alpha^{-n} u'\alpha \beta_K \in K\alpha^{n-1}I_{1, K}$ for $u'\in N' _{m_K +1}$, and we are reduced to
\begin{center}
$S_- f_n (\alpha^{-n})=\sum_{u'\in (N' _{m_K}\setminus N' _{m_K +1})/N' _{m_K +1}} f_n (\alpha^{-n} u'\alpha \beta_K)$
\end{center}
For a $u' =n'(\ast, \varpi^{m_K} _E t)$ for some $t\in \mathfrak{o}^\times _E$, we have (using \cite[(1)]{X2016})
\begin{center}
$\alpha^{-n} u'\alpha \beta_K =n (\ast, \varpi^{2n-1+n_K} _E t^{-1})h(\bar{t}^{-1})\alpha^{-n}n'(\ast, \varpi^{m_K} _E t^{-1})$.
\end{center}
Thus, we immediately get:
\begin{center}
$S_- f_n (\alpha^{-n})=(\sum_{(x, t)\in L^\times _{q^{4-t_K}}}\chi_\sigma (h(t))) \beta_K v_0$,
\end{center}
here we have identified the group $N' _{m_K}/N' _{m_K +1}$ with $L _{q^{4-t_K}}$, via the map $L_{m_K}$ in the introduction.

Hence, we get
\begin{center}
$c_-= \sum_{(x, t)\in L^\times _{q^{4-t_K}}}\chi_\sigma (h(t))$.
\end{center}

\smallskip
We move to deal with the last statement: $S_K f_{-n}= d_n f_{-n}$, for $n\geq 0$. The support of the function $S_K f_{-n}$ is contained in
\begin{center}
$K\alpha^n I_{1, K}\beta_K N_{n_K} \subseteq K \alpha^n K$
\end{center}
By \cite[Remark 3.8]{X2016}, we get:

when $n=0$, $S_K f_0 \in \langle f_0\rangle$;

when $n >0$, $S_K f_{-n} \in \langle f_{-n}, f_n\rangle$.

We consider the second case at first. Assume $n >0$. We compute:
\begin{center}
$S_K f_{-n} (\alpha^{-n})= \sum_{u \in N_{n_K}/N_{n_K +1}}~f_{-n} (\alpha^{-n}u\beta_K)=\sum_{u \in N_{n_K}/N_{n_K +1}} \beta_K v_0= 0$.
\end{center}

Next, we compute $S_K f_{-n} (\alpha^n)$:
\begin{center}
$S_K f_{-n} (\alpha^n)= \sum_{u \in N_{n_K}/N_{n_K +1}}~f_{-n} (\alpha^n u\beta_K)$.
\end{center}
Note that $\alpha^n u\beta_K \in K\alpha^{-n}N'_{m_K}$, for $u\in N_{n_K +1}$. We are thus reduced to:
\begin{center}
$S_K f_{-n} (\alpha^n)= \sum_{u \in (N_{n_K}\setminus N_{n_K +1})/N_{n_K +1}}~f_{-n} (\alpha^n u\beta_K)$.
\end{center}
For $u= n(\ast, \varpi^{n_K} _E t)$, for some $t\in \mathfrak{o}^\times _E$, we have (using \cite[(1)]{X2016}):
\begin{center}
$\alpha^n u\beta_K= n'(\ast, \varpi^{2n-1+m_K} _E t^{-1})h(t)\alpha^n n(\ast, \varpi^{n_K} _E t^{-1})$.
\end{center}
Thus, we get
\begin{center}
$S_K f_{-n} (\alpha^n)=(\sum_{(x, t)\in L^\times _{q^{t_K}}} \chi_\sigma ((h(t))) v_0$,
\end{center}
here we have identified the group $N_{n_K}/N_{n_K +1}$ with $L _{q^{t_K}}$, via the map $L_{n_K}$ in the introduction. Hence, we get:
\begin{center}
$d_n =\sum_{(x, t)\in L^\times _{q^{t_K}}} \chi_\sigma ((h(t))$
\end{center}

\begin{remark}
The exact values of $c_-$ and $d_n$ ($n\geq 1$) depend on the nature of the character $\chi_\sigma$, and they have already been computed explicitly in \cite[Appendix A]{Karol-Peng2012}. Note that $c_-$ is the same sum we have used in subsection \ref{subsec: ss}.
\end{remark}

\medskip
We still need to compute the constant $d_0$ appearing in $S_K f_0 =d_0 f_0$. By definition, the constant $d_0$ is determined by
\begin{equation}\label{equation determines d_0}
\sum_{u \in N_{n_K}/N_{n_K +1}} u\beta_K v_0 =d_0 v_0.
\end{equation}

\medskip
We recall some stuff from \cite[section 5]{Karol-Peng2012}:

$1)$.~(Definition 5.2 of \emph{loc.cit})

To any character $\chi$ of $H_0 /H_1$, a subset $J_K (\chi)\subset \{s\}$ is attached.

$2)$.~(Definition 5.3 of \emph{loc.cit})

For any subset $J \subset J_K (\chi)$, one defines a character $M_{\chi, J}$ of the finite Hecke algebra $\mathcal{H}_{\Gamma_K} := \text{End}_{\Gamma_K} (\text{Ind}^{\Gamma_K} _\mathbb{U} 1)$.

$3)$.~(Proposition 5.4 of \emph{loc.cit})

Every simple module of the algebra $\mathcal{H}_{\Gamma_K}$ is isomorphic to $M_{\chi, J}$ for some character $\chi$ of $H_0 /H_1$ and some $J\subset J_K (\chi)$.

$4)$.~(Proposition 5.5 of \emph{loc.cit})

The functor $\sigma \rightarrow \sigma^{\mathbb{U}}$ gives a bijection of isomorphism classes of irreducible representations of $\Gamma_K$ and isomorphism classes of simple right $\mathcal{H}_{\Gamma_K}$-modules.

\medskip

By $4)$ above, we write our $\sigma$ as $\sigma_{\chi_\sigma, J}$ such that:
\begin{center}
 $\sigma^\mathbb{U} \cong M_{\chi_\sigma, J}$,
 \end{center}
for some $J\subset J_K (\chi_\sigma)$. Then, by comparing \eqref{equation determines d_0} and the right action of $\mathcal{H}_{\Gamma_K}$ (\cite[3.1, (1)]{Karol-Peng2012}) on $\sigma^{\mathbb{U}}$, we see immediately that
\begin{center}
$d_0= M_{\chi_\sigma, J} (T_{\beta_K})$,
\end{center}
where $T_{\beta_K}$ is the Hecke operator in $\mathcal{H}_{\Gamma_K}\hookrightarrow \mathcal{H} (I_{1, K}, 1)$ which corresponds to the double coset $I_{1, K}\beta_K I_{1, K}$. By the identification in \cite[Proposition 5.7]{Karol-Peng2012}, our statement for the value of $d_0$ now follows from the lists in Definition 5.3 of \emph{loc.cit}:
\begin{center}
$d_0= \begin{cases}
-\chi_\sigma (h(\mathfrak{t})), ~~\text{if}~\sigma\cong~\text{a twist of the Steinberg weight}
\\
0, ~~~~~~~~~~~~~~~~~~\text{otherwise}\end{cases}$
\end{center}
Here, we note that the element $\beta_K$ is different from a normalized one (i.e., with determinant $1$) used in \cite{Karol-Peng2012} by exactly the diagonal matrix $h(\mathfrak{t})$.
\end{proof}

\begin{remark}
Our argument for the value of $d_0$ in the Proposition is nearly formal and works for any $\sigma$. But we point out:

$1)$.~ It is trivial to see $d_0 =0$ if $\sigma$ is a character.

$2)$.~ In the only case that $d_0$ is non-zero, i.e., $\sigma$ is a twist of the Steinberg weight, we can work it out by hand without referring to \cite{Karol-Peng2012}.
\end{remark}
\medskip

\section{$\overline{f_1}\neq 0$ in the degenerate case}

When the weight $\sigma$ is \emph{degenerate}, the Hecke operator $T_\sigma$ is different from $T$ (Definition \ref{super reps}). Because of that, we may verify with ease that the image of the function $f_1$ in a supersingular $\pi$ is non-zero.

\begin{proposition}\label{nonvanishing of f_1}
Assume $\sigma$ is degenerate, and $\pi$ is an irreducible quotient of $\textnormal{ind}^G _K \sigma/(T_\sigma)$. Then,  we have $\overline{f_1}\neq 0$.
\end{proposition}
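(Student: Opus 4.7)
The plan is a short proof by contradiction, built on two ingredients: the support of $Tf_0$ is extremely constrained, and $S_K$ exchanges $f_1$ with $f_{-1}$.

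First, I claim $Tf_0 \in \overline{\mathbf{F}}_p f_1 \oplus \overline{\mathbf{F}}_p f_{-1}$. Indeed, the convolution element representing $T$ in $\mathcal{H}_K(\sigma)$ is supported on $K\alpha K$, and $f_0$ on $K$, so $Tf_0$ has support in $K\alpha K$. Each basis function $f_n$ has support $K\alpha^{-n}I_{1,K}\subseteq K\alpha^{-n}K$; since $\beta_K\alpha\beta_K=\alpha^{-1}$ and $\beta_K\in K$, we have $K\alpha^{-n}K = K\alpha^{|n|}K$, and by the Cartan decomposition the double coset $K\alpha^{|n|}K$ meets $K\alpha K$ iff $|n|=1$. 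Since $T$ is a $G$-endomorphism of $\textnormal{ind}^G_K\sigma$, it preserves $I_{1,K}$-invariants, so $Tf_0 = af_1 + bf_{-1}$ for some $a,b\in\overline{\mathbf{F}}_p$.

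Now assume for contradiction that $\overline{f_1}=0$ in $\pi$. Applying $S_K$ (well-defined on $\pi^{I_{1,K}}$ by Proposition \ref{S_K and S_- preserve I_1}) and using $S_K f_1 = f_{-1}$ from Proposition \ref{the image of I_1 under S_K and S_-}(1), we get $\overline{f_{-1}} = S_K\overline{f_1} = 0$, hence $\overline{Tf_0} = 0$. On the other hand, $\pi$ is a quotient of $\textnormal{ind}^G_K\sigma/(T_\sigma)$, so $T_\sigma f_0 = Tf_0 - c_-f_0$ vanishes in $\pi$, giving $\overline{Tf_0} = c_-\overline{f_0}$. Thus $c_-\overline{f_0}=0$; by degeneracy of $\sigma$ together with Remark \ref{value of the sum in T_sigma}, $c_- = -\chi_\sigma(h(\mathfrak{t}))\neq 0$, so $\overline{f_0}=0$. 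But $f_0 = [1,v_0]$ generates $\textnormal{ind}^G_K\sigma$ as a $G$-representation (since $v_0$ generates the irreducible $K$-representation $\sigma$), contradicting $\pi\neq 0$.

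The argument has no serious obstacle: it essentially juxtaposes the support restriction on $Tf_0$, the $S_K$-symmetry between $f_1$ and $f_{-1}$ provided by Proposition \ref{the image of I_1 under S_K and S_-}, and the fact that in the degenerate case $T$ and $T_\sigma$ differ by the nonzero scalar $c_-$.
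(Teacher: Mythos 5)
Your proof is correct and follows essentially the same route as the paper's: assume $\overline{f_1}=0$, deduce $\overline{f_{-1}}=0$ via $S_K f_1 = f_{-1}$, and use $\overline{T_\sigma f_0}=0$ together with the degeneracy hypothesis $c_-=-\chi_\sigma(h(\mathfrak{t}))\neq 0$ to force $\overline{f_0}=0$, contradicting that $f_0$ generates $\textnormal{ind}^G_K\sigma$. The only difference is cosmetic: the paper quotes the explicit formula $T_\sigma f_0=f_{-1}+\lambda_{\beta_K,\sigma}f_1+\chi_\sigma(h(\mathfrak{t}))f_0$ from earlier work, whereas you re-derive the weaker but sufficient fact $Tf_0\in\langle f_1,f_{-1}\rangle$ from the Cartan decomposition.
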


\begin{proof}
Recall that when $\sigma$ is degenerate, we have
\begin{center}
$T_\sigma= T +\chi_\sigma (h(\mathfrak{t}))$.
\end{center}

By the assumption, the image of the function $T_\sigma f_0$ in $\pi$ is zero. By \cite[Proposition 3.6]{X2016}, we have
\begin{center}
$T_\sigma f_0= f_{-1} + \lambda_{\beta_K, \sigma} f_1 +\chi_\sigma (h(\mathfrak{t})) f_0$,
\end{center}
where $\lambda_{\beta_K, \sigma}$ is some constant (\cite[Remark 2.1]{X2017}). If the image of $f_1$ in $\pi$ is zero, it is the same for the function $f_{-1}$, as by definitions (or by $(1)$ of Proposition \ref{the image of I_1 under S_K and S_-}) we have:
\begin{center}
$f_{-1}= S_K f_1$.
\end{center}

In all, if $\overline{f_1}= 0$, then we get $\overline{\chi_\sigma (h(\mathfrak{t})) f_0} =0$, i.e., $\overline{f_0}=0$, as $\chi_\sigma (h(\mathfrak{t}))\neq 0$. We get a contradiction, as the function $f_0$ generates $\textnormal{ind}^G _K \sigma$, and its image in the irreducible representation $\pi$ can not be zero.
\end{proof}

\begin{remark}
We have indeed verified that the image of $f_1$ is non-zero in any irreducible quotient of $\textnormal{ind}^G _K \sigma/ (T- \lambda)$ with $\lambda \neq 0$.
\end{remark}

\begin{remark}
For a supersingular representation $\pi$ containing a regular weight $\sigma$, we believe $\overline{f_1}$ is still non-zero in $\pi$. But it seems challenging (to the author) to prove it at this stage.
\end{remark}

\section{Proof of Theorem \ref{main theorem: intro} and Corollary \ref{non-simpleness: intro}}\label{sec: proof of main results}

\begin{theorem}\label{main theorem for st}
Assume $\sigma$ is the Steinberg weight. Then Conjecture \ref{main conjeture} holds.
\end{theorem}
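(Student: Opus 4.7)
The plan is to argue by contradiction, following the strategy outlined in comment $(3)$ after Conjecture \ref{main conjeture}. Suppose that in $\pi$ one has $\overline{f_0} = c\,\overline{f_1}$ for some $c\in\overline{\mathbf{F}}_p^\times$, equivalently $\overline{f_0 - c f_1}=0$. For the Steinberg weight the character $\chi_\sigma$ is trivial, so by Remark \ref{value of the sum in T_sigma} together with Proposition \ref{the image of I_1 under S_K and S_-} the relevant constants specialise to $c_-=-1$, $d_0 = -1$, and $d_n=-1$ for all $n\geq 1$.

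The first step is to apply $S_-$ to the assumed relation. Since $S_-f_0 = f_1$ and $S_-f_1 = c_- f_1 = -f_1$, one obtains $(1+c)\,\overline{f_1}=0$. Proposition \ref{nonvanishing of f_1}, which applies because the Steinberg weight is degenerate, gives $\overline{f_1}\neq 0$, so necessarily $c=-1$ and the relation becomes $\overline{f_0 + f_1}=0$.

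Next I would iterate $S_K$ and $S_-$ to propagate vanishing relations. Applying $S_K$ to $\overline{f_0+f_1}=0$ with $S_K f_0 = d_0 f_0 = -f_0$ and $S_K f_1 = f_{-1}$ yields $\overline{f_0 - f_{-1}}=0$, and combining with the previous relation gives $\overline{f_1 + f_{-1}}=0$. A straightforward induction, alternating $S_-$ (which sends $f_{-n}\mapsto f_{n+1}$ and $f_n\mapsto -f_n$ for $n\geq 1$) with $S_K$ (which sends $f_n\mapsto f_{-n}$ and $f_{-n}\mapsto -f_{-n}$ for $n\geq 1$), produces $\overline{f_{n+1} - f_n}=0$ and $\overline{f_{-(n+1)} - f_{-n}}=0$ for every $n\geq 1$. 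Consequently the image of $(\textnormal{ind}_K^G\sigma)^{I_{1,K}}$ in $\pi$ collapses onto a single line: there is $v\in\pi$ with $\overline{f_n}=v$ for all $n\geq 1$ and $\overline{f_{-n}}=-v$ for all $n\geq 0$.

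Finally, I would invoke the Hecke relation coming from $\pi$ being a quotient of $\textnormal{ind}_K^G\sigma/(T_\sigma)$. For the Steinberg weight $T_\sigma = T+1$, so $(T+1)f_n \equiv 0$ in $\pi$ for every $n$. Using the explicit formula for $Tf_n$ from \cite[Proposition 3.6]{X2016} at a convenient index (e.g.\ $n=1$, where the boundary term $\lambda_{\beta_K,\sigma}$ does not interfere) and substituting $\overline{f_m}=\pm v$ from the collapse, one extracts an identity of the form $\lambda\cdot v = 0$ with $\lambda\in\overline{\mathbf{F}}_p^\times$. This forces $v=0$, hence $\overline{f_0}=-v=0$, contradicting the irreducibility of $\pi$ together with the fact that $f_0$ generates $\textnormal{ind}_K^G\sigma$ as a $G$-module. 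The main obstacle is the residual case $c=-1$, where the first application of $S_-$ gives no information: one must carefully track signs and indices through repeated alternating applications of $S_K$ and $S_-$ to obtain the rigid collapse, and then verify that the scalar extracted in the final Hecke step is indeed non-zero modulo $p$, which is where the odd-residue-characteristic hypothesis (ensuring in particular $2\neq 0$ in $\overline{\mathbf{F}}_p$) enters in an essential way.
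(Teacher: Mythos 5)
Your proposal is correct and uses exactly the same ingredients as the paper's proof (Proposition \ref{the image of I_1 under S_K and S_-}, Proposition \ref{nonvanishing of f_1}, the relation $T_\sigma=T+1$ together with $Tf_1=f_2$, and $p\neq 2$), merely running the two halves of the argument in the opposite order: you pin down the scalar by a single application of $S_-$ and then extract $2\overline{f_1}=0$ from the $S_K$, $S_-$ and Hecke computations, whereas the paper first forces $c$ via $S_K$, $S_-$ and the Hecke relation and then gets $2\overline{f_1}=0$ from $S_-$ alone. Two cosmetic points: the intermediate ``collapse'' induction is harmless but unnecessary (only $\overline{f_2}=\overline{f_1}$ is needed), and the formula $Tf_1=f_2$ comes from \cite[Corollary 3.11]{X2016} (with vanishing diagonal term because $\dim\sigma>1$), not from the formula for $Tf_0$ in Proposition 3.6 of \emph{loc.cit}.
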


\begin{proof}
Assume Conjecture \ref{main conjeture} fails, and there is a non-zero $c\in \overline{\mathbf{F}}^\times _p$ such that:
\begin{center}
the image $\overline{f(c)}$ of the function $f(c) :=f_0 + c\cdot f_1$ in $\pi$ is zero.
\end{center}
\medskip

Consider the function $f'(c) :=S_K f(c)$, which by Proposition \ref{the image of I_1 under S_K and S_-} is equal to
\begin{center}
$-f_0 + c\cdot f_{-1}$,
\end{center}
where we note that $d_0= -1$ in the current case.

Next, we consider the function $f'' (c) := S_- f'(c)$, which, by Proposition \ref{the image of I_1 under S_K and S_-} again, is equal to
\begin{center}
$-f_1 +c \cdot f_2$
\end{center}

Now, by \cite[Corollary 3.11]{X2016}, we have $T f_1 =f_2$ \footnote{Recall we have proved in \emph{loc.cit} that $T f_m = c_m f_m +f_{m+1}$ for any $m\geq 1$, even the value of the constant $c_m$ is not recorded explicitly there: it is zero if $\text{dim}_{\overline{\mathbf{F}}_p} \sigma > 1$ (using \cite[Remark 2.1, Lemma 3.7]{X2017}); when $\sigma$ is a character, it is equal to $\sum_{(x, t)\in L^\times _{q^{4-t_K}}}\chi_\sigma (h(t))$ (Remark \ref{value of the sum in T_sigma}).}. Recall that in the current case $T_\sigma =T+1$ (Definition \ref{super reps}). Putting all these together, we see
\begin{center}
$f'' (c) = (-c-1)f_1 + c\cdot (T+1)f_1$
\end{center}
As $\overline{f'' (c)}= \overline{(T+1)f_1}= 0$, we get
\begin{center}
$\overline{(-c-1)f_1}= 0$.
\end{center}
Thus, we must have $c=-1$, as $\overline{f_1}\neq 0$ (Proposition \ref{nonvanishing of f_1}).

Now we return to consider the function $S_- f(-1)$, which, by Proposition \ref{the image of I_1 under S_K and S_-}, is equal to
\begin{center}
$f_1 + f_1 =2f_1$,
\end{center}
where we note that in the current case $c_-$ is clearly equal to $-1$. We then get a contradiction, as $\overline{2f_1}\neq 0$ in $\pi$ ($p\neq 2$ !).
\end{proof}

\begin{remark}
Note that in Theorem \ref{main theorem for st} we don't put any restriction on the field $F$, e.g., it is not necessary to be characteristic $0$. But we do have used the assumption $p\neq 2$ !
\end{remark}

\begin{remark}\label{twist}
By Theorem \ref{main theorem for st}, Theorem \ref{main theorem: intro} follows immediately. As the kernel of any character of $G$ contains the group $I_{1, K}$, Theorem \ref{main theorem: intro} can be slightly extended to any supersingular representation of $G$ that contains a twist of the Steinberg weight by a character extendable to $G$.
\end{remark}

\begin{remark}
We remark briefly the cases beyond Theorem \ref{main theorem for st}:

$a)$.~For a supersingular representation $\pi$ containing a regular weight $\sigma$, we can also prove $\overline{f_0}$ and $\overline{f_1}$ are not proportional in $\pi$, if we \textbf{know} $\overline{f_1}\neq 0$ in $\pi$.

$b)$.~It seems the strategy we use to prove Theorem \ref{main theorem for st} might not simply work for a weight like \textbf{the trivial character} of $K$. But it seems one may still be able to conclude an analogue of Theorem \ref{main theorem: intro} for the trivial weight case from Theorem \ref{main theorem: intro}, by \textbf{changing of weight} in this setting (see \cite[Example 6.14]{Her2011b}, or \cite[Corollary 2.17]{Sch14}).
\end{remark}

\begin{corollary}\label{non-simpleness}
Let $\pi$ be a supersingular representation of $G$ containing the Steinberg weight of $K$. Then, the space $\pi^{I_{1, K}}$ as a right module over $\mathcal{H} (I_{1, K}, 1)$, is \textbf{not} simple.
\end{corollary}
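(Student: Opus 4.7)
My plan is to derive the corollary from Theorem \ref{main theorem for st} combined with the classification of simple right $\mathcal{H}(I_{1,K},1)$-modules established in \cite{Karol-Peng2012}. According to this classification, recalled in subsection \ref{subsec: motivation}, every simple right $\mathcal{H}(I_{1,K},1)$-module has dimension at most two; the supersingular simple modules are exactly the $1$-dimensional supersingular characters, while the non-supersingular simple modules arise as $I_{1,K}$-invariants of irreducible subquotients of principal series (and may have dimension up to $2$).

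Assuming for contradiction that $\pi^{I_{1,K}}$ is simple, Theorem \ref{main theorem for st} will force it to be exactly $2$-dimensional, hence a non-supersingular simple module. I then want to contradict this using the supersingularity of $\pi$. My route is to exhibit a central element $Z_\sigma \in \mathcal{H}(I_{1,K},1)$ built from $T_\sigma \in \mathcal{H}(K,\sigma)$ under the natural compatibility between the two Hecke actions on $(\textnormal{ind}^G_K\sigma)^{I_{1,K}}$, with two properties: (a) $Z_\sigma$ kills $\pi^{I_{1,K}}$, inherited from the fact that $T_\sigma$ kills $\pi$ by Definition \ref{super reps}; and (b) $Z_\sigma$ acts by a non-zero scalar on every $2$-dimensional simple $\mathcal{H}(I_{1,K},1)$-module, this being the defining feature of non-supersingular simple modules in the classification. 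Properties (a) and (b) are in direct conflict, closing the argument.

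The main obstacle is to exhibit $Z_\sigma$ cleanly and verify property (b). Both can be extracted from the explicit description of the pro-$p$-Iwahori Hecke algebra and its simple modules in \cite[Section 5 and Appendix A]{Karol-Peng2012}, combined with the eigenvalue computation in Proposition \ref{eigenvalue for T} above, which already records how the spherical operator $T$ (and hence $T_\sigma$) acts on principal series — this is precisely the datum that distinguishes supersingular from non-supersingular characters of the relevant commutative subalgebra of $\mathcal{H}(I_{1,K},1)$.
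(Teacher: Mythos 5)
Your first half coincides with the paper's setup: combine Theorem \ref{main theorem for st} with the classification of simple $\mathcal{H}(I_{1,K},1)$-modules from \cite{Karol-Peng2012} to force a putative simple $\pi^{I_{1,K}}$ to be a two-dimensional, hence non-supersingular, module. Where you diverge is in how you rule this out. The paper first reduces to the case where $\pi$ is admissible (legitimate because simple modules of $\mathcal{H}(I_{1,K},1)$ are finite dimensional) and then quotes the theorem of Ollivier--Vign\'eras that the $I_{1,K}$-invariants of an admissible supersingular representation admit only supersingular subquotients; that immediately contradicts non-supersingularity. You instead propose to re-derive this fact directly via a central element $Z_\sigma$ of $\mathcal{H}(I_{1,K},1)$ matching $T_\sigma$ on $(\textnormal{ind}^G_K\sigma)^{I_{1,K}}$. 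This is morally exactly how the cited Ollivier--Vign\'eras result is proved, and your route has the small advantage of not needing the admissibility reduction (the contradiction hypothesis already gives finite dimensionality).

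The weak point is that the existence of $Z_\sigma$ with properties (a) and (b) is the entire content of the step, and you assert it rather than establish it. It does not follow from Proposition \ref{eigenvalue for T}, which concerns the action of the \emph{spherical} algebra on $\textnormal{Hom}_G(\textnormal{ind}^G_K\sigma,\textnormal{ind}^G_B\varepsilon)$ and says nothing about central elements of $\mathcal{H}(I_{1,K},1)$; what you need is the Satake--Bernstein compatibility (Ollivier, Ollivier--Vign\'eras) identifying $T$ with a central element acting on $(\textnormal{ind}^G_K\sigma)^{I_{1,K}}$, together with the computation that this element acts by a nonzero scalar on the two-dimensional simple modules relevant here. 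Neither is "extracted with ease" from \cite[Section 5, Appendix A]{Karol-Peng2012}; you should either prove this compatibility for $U(2,1)$ or cite it. Two smaller points: (i) $T_\sigma$ does not literally kill $\pi$; it kills the image of $\textnormal{ind}^G_K\sigma$ in $\pi$, so your property (a) should be phrased as: $Z_\sigma$ kills $\overline{f_0}\neq 0$, hence by centrality and Schur's lemma acts by zero on the simple module $\pi^{I_{1,K}}$; (ii) in property (b) you only need the scalar to be nonzero on the particular two-dimensional module $\pi^{I_{1,K}}$, whose underlying character of $H_0/H_1$ is constrained by $\chi_\sigma$ --- claiming it for \emph{every} two-dimensional simple module is stronger than necessary and harder to check uniformly.
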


\begin{proof}
 As simple modules of the pro-$p$-Iwahori--Hecke algebra $\mathcal{H} (I_{1, K}, 1)$ are finite dimensional, for our purpose we may assume\footnote{We would like to thank Karol Kozio{\l} for pointing out this.} $\pi$ is admissible, that is $\text{dim}~\pi^{I_{1, K}} < \infty$. By \cite[4.2]{Karol-Peng2012}, any supersingular module of $\mathcal{H} (I_{1, K}, 1)$ is a character. Thanks to Ollivier--Vign$\acute{\text{e}}$ras \cite[Theorem 5.3, Remark 5.2 (e)]{OV2017}, we know the pro-$p$-Iwahori invariants of an admissible supersingular representation only admits supersingular subquotients. Now the corollary follows from Theorem \ref{main theorem for st}.
\end{proof}

\section{Appendix A: some remarks on $GL_2 (F)$}\label{sec: appendix for GL_2}

In this appendix, we point out that an analogue of Conjecture \ref{main conjeture} is true for $GL_2 (F)$, due to Barthel--Livn$\acute{\text{e}}$. \emph{Our notations in this appendix are independent from other parts of this paper.}

\medskip

Let $F$ be a non-archimedean local field, with ring of integers $\mathfrak{o}_F$ and maximal ideal $\mathfrak{p}_F$, and let $k_F$ be its residue field of characteristic $p$. Let $G=GL_2 (F)$, $K= GL_2 (\mathfrak{o}_F)$ the maximal compact open subgroup of $G$, $Z= F^\times$ the center of $G$. Let $I$ be the standard Iwahori subgroup of $G$, and $I_1$ be the pro-$p$-Sylow subgroup of $I$. Let $K_1$ be the first congruence subgroup of $K$, so that $K/K_1 \cong GL_2 (k_F)$. Fix a uniformizer $\varpi$ in $F$. Denote by $\alpha$ and $\beta$ respectively the following two matrices:
\begin{center}
$\begin{pmatrix} 1  & 0  \\ 0  & \varpi
\end{pmatrix}, \begin{pmatrix} 0  & 1  \\ 1  & 0
\end{pmatrix}$
\end{center}
Put $\gamma= \alpha \beta$. Note that $\gamma^2 = \varpi I_2$  and $\gamma$ normalizes $I_1$.

Let $\sigma$ be an irreducible representation of $GL_2 (k_F)$ over $\overline{\mathbf{F}}_p$, inflated to $K=GL_2 (\mathfrak{o}_F)$ via the isomorphism $K/K_1 \cong GL_2 (k_F)$. Conversely, any irreducible smooth representation of $K$ over $\overline{\mathbf{F}}_p$ arises from such an inflation. Nowadays, such a representation $\sigma$ is usually called a \emph{weight} of $GL_2 (k_F)$ or $K$.  We extend $\sigma$ to a representation of $KZ$ by requiring $\varpi$ acts trivially.

An irreducible smooth $\overline{\mathbf{F}}_p$-representation of $G$ is called \emph{supersingular} if it is a quotient of $\text{ind}^G _{KZ} \sigma /(T)$, for a weight $\sigma$ of $K$, and for certain Hecke operator $T$ in the spherical Hecke algebra $\mathcal{H} (KZ, \sigma)$ (\cite[3.1]{B-L94}).

Fix a non-zero vector $v_0 \in \sigma^{I_1}$. For $n\in \mathbb{Z}$, let $\psi_n$ be the function in $(\text{ind}^G _{KZ} \sigma)^{I_1}$, supported on $KZ \alpha^{-n} I_1$, such that
\begin{center}
$\psi_n (\alpha^{-n})=  \begin{cases}
\beta\cdot v_0, ~~~~~~~n>0,\\
v_0 ~~~~~~~~~~~~~~~n\leq 0.
\end{cases}$
\end{center}

\begin{remark}
Up to a scalar, the functions $\psi_n$ defined above coincide that in \cite[section 4]{B-L94}. One can simply check
\begin{center}
$\psi_1 = \gamma \cdot \psi_0$
\end{center}
\end{remark}

Now we are ready to state the following analogue of Conjecture \ref{main conjeture} for $GL_2 (F)$, due to Barthel--Livn$\acute{\text{e}}$.

\begin{theorem}\label{GL_2}
Let $\sigma$ be a weight of $K$. Then, the images of $\psi_0$ and $\psi_1$ in any irreducible quotient of $\textnormal{ind}^G _{KZ} \sigma /(T)$ are \textbf{not} proportional.
\end{theorem}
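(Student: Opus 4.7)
The plan is to adapt the proof of Theorem \ref{main theorem for st} to the $GL_2 (F)$ setting, where the argument simplifies because a single element $\gamma \in G$ normalizing $I_1$ takes the place of the two partial maps $S_K$ and $S_-$ used for $U(2, 1)$.

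Suppose for contradiction that in some irreducible quotient $\pi$ of $\textnormal{ind}^G_{KZ} \sigma /(T)$, the images $\overline{\psi_0}$ and $\overline{\psi_1}$ are proportional. Since $\psi_0$ generates $\textnormal{ind}^G_{KZ} \sigma$ as a $G$-representation and $\pi \neq 0$, we have $\overline{\psi_0} \neq 0$, so $\overline{\psi_1} = c \overline{\psi_0}$ for some $c \in \overline{\mathbf{F}}_p$. First, using $\psi_1 = \gamma \cdot \psi_0$ (the remark preceding the theorem) together with $\gamma^2 = \varpi I_2$ acting trivially on $\textnormal{ind}^G_{KZ} \sigma$ (since $\sigma$ was extended to $KZ$ by $\varpi \mapsto 1$), we obtain $\gamma \cdot \psi_1 = \psi_0$. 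Applying $\gamma$ to the assumed relation then gives $\overline{\psi_0} = c \overline{\psi_1} = c^2 \overline{\psi_0}$, which forces $c^2 = 1$, i.e.\ $c = \pm 1$.

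Next, I would invoke Barthel--Livn\'e's explicit formulas for the Hecke operator $T$ acting on the basis $\{\psi_n\}_{n \in \mathbb{Z}}$ of $(\textnormal{ind}^G_{KZ} \sigma)^{I_1}$ (see \cite{B-L94}): for each $n$, the element $T \psi_n$ equals $\psi_{n+1}$ plus a correction involving a few nearby basis vectors, with coefficients depending on $\sigma$. Since $T$ acts as zero on $\pi$, these formulas yield additional linear relations between the images $\overline{\psi_n}$; using them inductively, together with $\overline{\psi_1} = c\overline{\psi_0}$ and $c = \pm 1$, one should be able to express every $\overline{\psi_n}$ in $\pi$ as an explicit scalar multiple of $\overline{\psi_0}$. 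Matching the scalars in the two sides of one such Hecke identity should then collapse to a relation $\lambda \cdot \overline{\psi_0} = 0$ with $\lambda \in \overline{\mathbf{F}}_p^\times$, contradicting $\overline{\psi_0} \neq 0$.

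The main obstacle, as in Theorem \ref{main theorem for st}, is the dichotomy according to whether $\sigma$ is a character of $K$ or a higher-dimensional weight. In the character case, Barthel--Livn\'e's formula for $T \psi_0$ carries an extra correction term (analogous to the \emph{degenerate} case in $U(2, 1)$), and the final scalar $\lambda$ must be computed with care, mirroring the role played by the identities $d_0 = -1$ and $c_- = -1$ in the Steinberg case of Theorem \ref{main theorem for st}; a non-vanishing statement in the spirit of Proposition \ref{nonvanishing of f_1} (namely $\overline{\psi_1} \neq 0$ in $\pi$) is expected to feed into this last step. In the non-character case, the Hecke formula is cleaner and the contradiction should be immediate from the relation $c^2 = 1$ together with a single application of $T \psi_0 \equiv 0$ in $\pi$.
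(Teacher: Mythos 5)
The paper does not reprove this statement; its ``proof'' is a pointer to the argument of \cite[Lemma 35]{B-L94}. Your opening move is correct and is a genuine simplification available for $GL_2$: from $\overline{\psi_1}=c\,\overline{\psi_0}$, applying $\gamma$ (which normalizes $I_1$, satisfies $\gamma^2=\varpi I_2$ acting trivially, and swaps $\psi_0$ and $\psi_1$) gives $\overline{\psi_0}=c^2\overline{\psi_0}$, hence $c=\pm 1$ since $\overline{\psi_0}\neq 0$. The problem is everything after that.

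The proposed completion --- feed the relations $T\psi_n\equiv 0$ into the recursion and extract a nonzero scalar $\lambda$ with $\lambda\overline{\psi_0}=0$ --- does not work, and the claim that the non-character case is ``immediate'' is false. When $\dim\sigma>1$ the Hecke formulas are clean precisely in the way that defeats you: $T\psi_n=\psi_{n+1}$ for $n\geq 1$ and $T\psi_0=\psi_{-1}$ (no correction terms), so passing to the quotient by $(T)$ only kills $\overline{\psi_m}$ for $m\notin\{0,1\}$ and imposes \emph{no} linear relation between $\overline{\psi_0}$ and $\overline{\psi_1}$; the $\gamma$-symmetry $\overline{\psi_m}\mapsto\overline{\psi_{1-m}}$ is likewise consistent with $\overline{\psi_1}=\pm\overline{\psi_0}$. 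No contradiction can be manufactured from $T$, $\gamma$ and the basis $\{\psi_n\}$ alone. The actual obstruction lives in the Iwahori (indeed $K$-) module structure, which your sketch never invokes: $\overline{\psi_0}$ is an $I$-eigenvector for the character $\chi_\sigma$ while $\overline{\psi_1}$ is one for $\chi_\sigma^{s}$, and both are nonzero because each of $\psi_0$ and $\psi_1=\gamma\cdot\psi_0$ generates $\textnormal{ind}^G_{KZ}\sigma$. Hence proportionality is impossible whenever $\chi_\sigma\neq\chi_\sigma^{s}$, which covers all weights except twists of the trivial and Steinberg weights; those remaining cases require the finer analysis of the $K$-subrepresentations generated by $\overline{\psi_0}$ and $\overline{\psi_1}$ that Barthel--Livn\'e carry out in the argument of their Lemma 35. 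Also note that, unlike Theorem \ref{main theorem for st}, the $GL_2$ statement holds with no restriction on $p$, so any endgame that hinges on a scalar like $2$ being invertible (as in the $U(2,1)$ Steinberg case) cannot be the right mechanism here.
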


\begin{proof}
The is indeed verified in the argument of \cite[Lemma 35]{B-L94}.
\end{proof}

\begin{remark}

Note that we can not deduce an analogue of Corollary \ref{non-simpleness} from Theorem \ref{GL_2}, as supersingular modules of $GL_2 (F)$ are all two-dimensional (\cite{Vig04}). Note also that, when $F= \mathbf{Q}_p$, a main input in \cite{Breuil03} is to prove the functions $\psi_0$ and $\psi_1$ together give a basis of the $I_1$-invariants of $\textnormal{ind}^G _{KZ} \sigma /(T)$, which fails for \textbf{any} other $F$.

\end{remark}

\section*{Acknowledgements}
The germ underlying this paper generates when the author was a Research Associate at University of Warwick, and he would like to thank Warwick Mathematics Institute for providing an outstanding working environment. The author was supported by a Postdoc grant from Leverhulme Trust RPG-2014-106 and European Research Council project 669655.
\medskip

\bibliographystyle{amsalpha}
\bibliography{new}

\providecommand{\bysame}{\leavevmode\hbox to3em{\hrulefill}\thinspace}
\providecommand{\MR}{\relax\ifhmode\unskip\space\fi MR }
\providecommand{\MRhref}[2]{%
  \href{http://www.ams.org/mathscinet-getitem?mr=#1}{#2}
}
\providecommand{\href}[2]{#2}
\begin{thebibliography}{AHHV17}

\bibitem[AHHV17]{AHHV17}
N.~Abe, G.~Henniart, F.~Herzig, and M.-F. Vign\'eras, \emph{A classification of
  irreducible admissible {${\rm mod}\, p$} representations of {$p$}-adic
  reductive groups}, J. Amer. Math. Soc. \textbf{30} (2017), no.~2, 495--559.
  \MR{3600042}

\bibitem[BH06]{BH2006}
Colin~J. Bushnell and Guy Henniart, \emph{The local {L}anglands conjecture for
  {$\rm GL(2)$}}, Grundlehren der Mathematischen Wissenschaften [Fundamental
  Principles of Mathematical Sciences], vol. 335, Springer-Verlag, Berlin,
  2006. \MR{2234120 (2007m:22013)}

\bibitem[BL94]{B-L94}
Laure Barthel and Ron Livn{\'e}, \emph{Irreducible modular representations of
  {${\rm GL}_2$} of a local field}, Duke Math. J. \textbf{75} (1994), no.~2,
  261--292. \MR{1290194 (95g:22030)}

\bibitem[BL95]{B-L95}
\bysame, \emph{Modular representations of {${\rm GL}_2$} of a local field: the
  ordinary, unramified case}, J. Number Theory \textbf{55} (1995), no.~1,
  1--27. \MR{1361556 (96m:22036)}

\bibitem[BP12]{BP12}
Christophe Breuil and Vytautas Pa{\v{s}}k{\=u}nas, \emph{Towards a modulo {$p$}
  {L}anglands correspondence for {${\rm GL}_2$}}, Mem. Amer. Math. Soc.
  \textbf{216} (2012), no.~1016, vi+114. \MR{2931521}

\bibitem[Bre03]{Breuil03}
Christophe Breuil, \emph{Sur quelques repr\'esentations modulaires et
  {$p$}-adiques de {${\rm GL}_2(\bold Q_p)$}. {I}}, Compositio Math.
  \textbf{138} (2003), no.~2, 165--188. \MR{2018825 (2004k:11062)}

\bibitem[CE04]{C-E2004}
Marc Cabanes and Michel Enguehard, \emph{Representation theory of finite
  reductive groups}, New Mathematical Monographs, vol.~1, Cambridge University
  Press, Cambridge, 2004. \MR{2057756 (2005g:20067)}

\bibitem[EGS15]{EGS15}
Matthew Emerton, Toby Gee, and David Savitt, \emph{Lattices in the cohomology
  of {S}himura curves}, Invent. Math. \textbf{200} (2015), no.~1, 1--96.
  \MR{3323575}

\bibitem[Her11a]{Her2011b}
Florian Herzig, \emph{The classification of irreducible admissible mod {$p$}
  representations of a {$p$}-adic {${\rm GL}_n$}}, Invent. Math. \textbf{186}
  (2011), no.~2, 373--434. \MR{2845621}

\bibitem[Her11b]{Her2011a}
\bysame, \emph{A {S}atake isomorphism in characteristic {$p$}}, Compos. Math.
  \textbf{147} (2011), no.~1, 263--283. \MR{2771132}

\bibitem[KX15]{Karol-Peng2012}
Karol Kozio{\l} and Peng Xu, \emph{Hecke modules and supersingular
  representations of {${\rm U}(2,1)$}}, Representation Theory \textbf{19}
  (2015), 56--93. \MR{3321473}

\bibitem[OV17]{OV2017}
Rachel Ollivier and Marie-France Vign{\'e}ras, \emph{Parabolic induction in
  characteristic {$p$}}, https://arxiv.org/abs/1703.04921, Preprint, 2017.

\bibitem[Sch14]{Sch14}
Michael~M. Schein, \emph{On the universal supersingular {${\rm mod}\,p$}
  representations of {${\rm GL}_2(F)$}}, J. Number Theory \textbf{141} (2014),
  242--277. \MR{3195399}

\bibitem[Vig04]{Vig04}
Marie-France Vign{\'e}ras, \emph{Representations modulo {$p$} of the {$p$}-adic
  group {${\rm GL}(2,F)$}}, Compos. Math. \textbf{140} (2004), no.~2, 333--358.
  \MR{2027193 (2004m:22028)}

\bibitem[Xu14]{X2014}
Peng Xu, \emph{On the mod-$p$ representations of unramified {$\rm{U}(2,1)$}},
  PhD Thesis, University of East Anglia, 2014.

\bibitem[Xu16]{X2016}
\bysame, \emph{Freeness of spherical {H}ecke modules of unramified {${\rm
  U}(2,1)$} in characteristic {$p$}}, http://arxiv.org/abs/1605.00262,
  Preprint, 2016.

\bibitem[Xu17]{X2017}
\bysame, \emph{On certain {I}wahori representation of unramified {$\rm{U}(2,1)$
  in characteristic {$p$}}}, https://arxiv.org/abs/1706.02674, Preprint, 2017.

\bibitem[Xu18]{X2018}
\bysame, \emph{Hecke eigenvalues in {$p$}-modular representations of unramifed
  {${\rm U}(2,1)$}}, Preprint, 2018.

\end{thebibliography}

\texttt{Einstein Institute of Mathematics, HUJI, Jerusalem, 9190401, Israel}

\emph{E-mail address}: \texttt{Peng.Xu@mail.huji.ac.il}

\end{document}